\newtheorem{thm}{Theorem}[section]
\newtheorem{lem}[thm]{Lemma}
\newtheorem{cor}[thm]{Corollary}
\newtheorem{open}[thm]{Open Problem}
\theoremstyle{definition}
\newtheorem{dfn}[thm]{Definition}
\newtheorem{exm}[thm]{Example}
\theoremstyle{remark}
\newtheorem{rem}[thm]{Remark}
\newcommand{\exmsymbol}{\hfill$\circ$}
\newcommand{\nset}{\mathds{N}}
\newcommand{\rset}{\mathds{R}}
\newcommand{\diff}{\mathrm{d}}
\newcommand{\divv}{\mathrm{div}\,}
\newcommand{\lin}{\mathrm{lin}\,}
\newcommand{\pos}{\mathrm{Pos}}
\newcommand{\sos}{\mathrm{SOS}}
\newcommand{\war}{\mathrm{War}}
\newcommand{\rank}{\mathrm{rank}\,}
\newcommand{\supp}{\mathrm{supp}\,}
\newcommand{\pol}{\mathrm{Pol}}
\newcommand{\cat}{\mathcal{C}}
\newcommand{\cD}{\mathcal{D}}
\newcommand{\cH}{\mathcal{H}}
\newcommand{\cS}{\mathcal{S}}
\newcommand{\cX}{\mathcal{X}}
\newcommand{\cV}{\mathcal{V}}
\newcommand{\cZ}{\mathcal{Z}}
\newcommand{\sA}{\mathsf{A}}
\newcommand{\fp}{\mathfrak{p}}
\newcommand{\motz}{\mathrm{Motz}}
\newcommand{\rob}{\mathrm{Rob}}
\newcommand{\cl}{\mathrm{CL}}
\definecolor{dkgreen}{rgb}{0,0.4,0}
\journal{working project}
\begin{document}

\begin{frontmatter}

\title{Time-dependent moments from partial differential equations and the time-dependent set of atoms}

\author[add1]{Ra\'ul E. Curto}
\ead{raul-curto@uiowa.edu}

\author[add2]{Philipp J.\ di~Dio}
\ead{philipp.didio@uni-konstanz.de}

\author[add3,add4]{Milan Korda}
\ead{korda@laas.fr}

\author[add3,add5]{Victor Magron}
\ead{vmagron@laas.fr}

\address[add1]{University of Iowa, Department of Mathematics, 52246 Iowa City, Iowa, U.S.A.}
\address[add2]{University of Konstanz, Department of Mathematics and Statistics, Universit\"atsstra{\ss}e 10, D-78464 Konstanz, Germany}
\address[add3]{Laboratoire d'analyse et d'architecture des systèmes (LAAS-CNRS), 7 Avenue du Colonel Roche, 31031 Toulouse, France}
\address[add4]{Faculty of Electrical Engineering, Czech Technical University in Prague, Technick\'a 2, CZ-16626 Prague, Czech Republic}
\address[add5]{Institut de Mathématiques de Toulouse, 118 Route de Narbonne, 31062 Toulouse France}

\begin{abstract}
We study the time-dependent moments and associated polynomials arising from the partial differential equation $\partial_t f = \nu\Delta f + g\cdot\nabla f + h\cdot f$, and consider in detail the dual equation.
For the heat equation we find that several non-negative polynomials which are not sums of squares become sums of squares under the heat equation in finite time.
We show that every non-negative polynomial in $\rset[x,y,z]_{\leq 4}$ becomes a sum of squares in finite time under the heat equation.
We solve the problem of moving atoms under the equation $\partial_t f = g\cdot\nabla f + h\cdot f$ with $f_0 = \mu_0$ being a finitely atomic measure.
The time evolution $\mu_t = \sum_{i=1}^k c_i(t)\cdot \delta_{x_i(t)}$ of the atom positions $x_i(t)$ are described by the transport term $g\cdot\nabla$ and the time-dependent coefficients $c_i(t)$ have an explicit solution depending on $x_i(t)$, $h$, and $\divv g$.
\end{abstract}

\begin{keyword}
moment, time evolution, partial differential equations, sum of squares, heat equation
\MSC[2020] Primary: 47A57\sep 44A60; Secondary: 30E05\sep 65D32
\end{keyword}
\end{frontmatter}

\tableofcontents

\section{Introduction}

\subsection{Static Moments}

Let $\mu$ be a Borel measure on $\rset^n$ for some $n\in\nset$ and $\alpha=(\alpha_1,\dots,\alpha_n)\in\nset_0^n$. The $\alpha$-moment $s_\alpha$ of $\mu$ is
\begin{equation}\label{eq:moments}
s_\alpha = \int_{\rset^n} x^\alpha~\diff\mu(x)
\end{equation}
with $x^\alpha := x_1^{\alpha_1}\cdots x_n^{\alpha_n}$. The \emph{classical moment problem} is: Given finitely or infinitely many real numbers $s_\alpha$ in a sequence $s = (s_\alpha)_{\alpha\in\sA\subseteq\nset_0^n}$, does there exist a Borel measure $\mu$ such that (\ref{eq:moments}) holds for all $\alpha\in\sA$? If the answer to this question is affirmative, then $\mu$ is called a \emph{representing measure} of $s$ and $s$ is called a \emph{moment sequence}. If $\sA$ is finite, then $s$ is a \emph{truncated} moment sequence, and if $\sA=\nset_0^n$, then $s$ is called \emph{full} moment sequence. Additionally, if $\supp\mu\subseteq K$ for some $K\subseteq\rset^n$, then $s$ is called a \emph{$K$-moment sequence}. A classical tool to investigate moment sequences is the \emph{Riesz functional} $L = L_s:\rset[x_1,\dots,x_n]\to\rset$ defined by $L_s(x^\alpha) := s_\alpha$ and linearly extended to $\rset[x_1,\dots,x_n]$ if $s$ is full or linearly extended to $\{x^\alpha \,|\, \alpha\in\sA\}$ if $s$ is truncated. $L:\rset[x_1,\dots,x_n]\to\rset$ is called a \emph{moment functional} if it is represented by some Borel measure $\mu$. We always assume measures to be non-negative unless specifically denoted as \emph{signed} measures.

Moments are a classical field of research \cite{akhiezClassical,blekSemiOpt,didioCone22,schmudMomentBook,fialkoMomProbSurv,havila35,
havila36,infusino17,kemper68,kemper71,kemper87,kreinMarkovMomentProblem,landauMomAMSProc,
landau80,lasserreSemiAlgOpt,laurent05,lauren09,lauren09a,shohat43,stielt94,stochel01,
infusi20,infusi22} and in modern times they are still of interest, e.g.\ because of the following application in optimization. Let $p\in\rset[x_1,\dots,x_n]$ be a polynomial. Then 
\begin{equation}\label{eq:min}
\min_{x_0\in K\subseteq\rset^n} p(x_0) = \min_{\substack{\mu:\supp\mu\subseteq K,\\ \mu(K)=1}} \int p(x)~\diff\mu(x) = \min_{\substack{s\ K\text{-moment}\\ \text{sequence},\ s_0=1}} L_s(p),
\end{equation}
since for the first equality we have that for any $x_0\in\rset^n$ the Dirac measure $\delta_{x_0}$ acts as a point evaluation in (\ref{eq:moments}) and the second formulation holds by linearity of the integral using the $s_\alpha$ definition in (\ref{eq:moments}). See e.g.\ \cite{lasserreSemiAlgOpt} for more.

A classical result for truncated moment sequences is the \emph{Richter} \cite{richte57} (or \emph{Richter--Rogosinski--Rosenbloom} \cite{richte57,rogosi58,rosenb52}) \emph{Theorem}; see \cite{didioCone22} for a detailed discussion about the historical development.

\begin{thm}[{Richter Theorem 1957 \cite[Satz 4]{richte57}}]\label{thm:richter}
Let $d\in\nset$, $\cV$ be a $d$-dimensional real vector space of measurable real functions $f:\cX\to\rset$ on a measurable space $\cX$, and $L:\cV\to\rset$ be a moment functional, i.e., there exists a measure $\mu$ on $\cX$ such that
\[L(f) = \int_\cX f(x)~\diff\mu(x)\]
for all $f\in\cV$. Then there are $c_1,\dots,c_k>0$ and $x_1,\dots,x_k\in\cX$ with $k\leq d$ such that
\[L(f) = \sum_{i=1}^k c_i\cdot f(x_i)\]
for all $f\in\cV$, i.e., we always find a $k$-atomic representing measure $\nu = \sum_{i=1}^k c_i\cdot\delta_{x_i}$ of $L$ with $k\leq d$.
\end{thm}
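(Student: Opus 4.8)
The plan is to reduce the statement to a finite-dimensional fact of convex geometry and to run an induction on $d=\dim\cV$ in order to handle the possible non-closedness of the relevant cone. First I would fix a basis $f_1,\dots,f_d$ of $\cV$, form the evaluation map $\ell\colon\cX\to\rset^d$, $\ell(x):=(f_1(x),\dots,f_d(x))$, and the moment vector $s:=(L(f_1),\dots,L(f_d))=\int_\cX\ell(x)\,\diff\mu(x)$. Let $\cC:=\conv\{\lambda\ell(x):\lambda\ge 0,\ x\in\cX\}$ be the convex cone generated by the evaluation vectors and $V:=\lin\cC$ its span. The first observation is that $s\in\overline{\cC}$: whenever $a\in\rset^d$ satisfies $\langle a,\ell(x)\rangle\ge 0$ for all $x\in\cX$, we get $\langle a,s\rangle=\int_\cX\langle a,\ell(x)\rangle\,\diff\mu(x)\ge 0$, so by the bipolar theorem for convex cones $s$ lies in the closed conic hull of $\{\ell(x):x\in\cX\}$, namely $\overline{\cC}$.

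Now I would distinguish two cases. If $s$ lies in the relative interior of $\cC$, then in particular $s\in\cC$, and Carathéodory's theorem for convex cones gives $c_1,\dots,c_k>0$ and $x_1,\dots,x_k\in\cX$ with $k\le d$ and $s=\sum_{i=1}^k c_i\,\ell(x_i)$; reading this off coordinate by coordinate and extending by linearity shows that $\nu:=\sum_{i=1}^k c_i\,\delta_{x_i}$ is a $k$-atomic representing measure for $L$ on $\cV$, as desired. Otherwise $s$ is not in $\mathrm{relint}\,\cC=\mathrm{relint}\,\overline{\cC}$, so it lies on the relative boundary of the closed convex cone $\overline{\cC}$; a supporting hyperplane there produces $a\ne 0$ with $\langle a,\ell(x)\rangle\ge 0$ for all $x\in\cX$ and $\langle a,s\rangle\le 0$. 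Combined with $\langle a,s\rangle=\int_\cX\langle a,\ell(x)\rangle\,\diff\mu(x)$ and the non-negativity of the integrand, this forces $\langle a,\ell(x)\rangle=0$ for $\mu$-almost every $x$. Equivalently, $g:=\sum_j a_j f_j$ is a nonzero element of $\cV$ vanishing on the full-measure set $\cX':=g^{-1}(0)$; hence $\cV|_{\cX'}$ has dimension at most $d-1$, the functional $L$ depends only on restrictions to $\cX'$ (its complement being $\mu$-null) and is there represented by $\mu|_{\cX'}$, and the inductive hypothesis yields a representing measure with at most $d-1\le d$ atoms, all lying in $\cX'\subseteq\cX$, which then also represents the original $L$. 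The base case $d=0$, where $L=0$ and the empty sum works, is trivial.

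The genuine obstacle is precisely the gap between $\overline{\cC}$ and $\cC$: the soft estimate "$s$ lies in the closed cone generated by the evaluation vectors" is immediate but does \emph{not} by itself produce an exact finite-atomic representation (this is the usual phenomenon that a barycenter need only lie in the \emph{closed} convex hull of a range), and it is the induction — peeling off one supporting hyperplane, equivalently one nonzero non-negative element of $\cV$ annihilated by $L$, at a time — that upgrades it to $s\in\cC$. Apart from this, only routine verifications remain: that $\cX'=g^{-1}(0)$ is measurable so that $\mu$ restricts to it (no topology on $\cX$ is needed), that $L$ and its representability descend to $\cV|_{\cX'}$ because $L(f)$ depends on $f$ only up to $\mu$-null sets, and that the conical Carathéodory bound is the dimension of the ambient span, which gives $k\le d$.
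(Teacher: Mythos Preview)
Your argument is correct and is essentially the standard modern proof via Carath\'eodory's theorem for convex cones together with the supporting-hyperplane induction that handles the relative boundary case. The details you flag---measurability of $\cX'=g^{-1}(0)$, that $L$ descends to $\cV|_{\cX'}$, and that $V=\lin\{\ell(x):x\in\cX\}=\rset^d$ because the $f_j$ are linearly independent as functions---all check out.

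There is, however, nothing to compare against: the paper does not supply its own proof of this theorem. It is stated in the introduction as a classical background result with a citation to Richter's 1957 paper \cite[Satz 4]{richte57} (and the surrounding discussion mentions the Rogosinski and Rosenbloom variants), and is then invoked as a black box in the proofs of \Cref{thm:sos} and \Cref{thm:waring}. So your write-up is not an alternative to the paper's approach but rather a self-contained proof of a result the paper merely cites.
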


The points $x_i$ in the $k$-atomic representing measure $\nu$ are called \emph{atoms} and the minimal number $k$ of atoms for fixed $L$ is called \emph{Carath\'eodory number} $\cat(L)$ resp.\ $\cat(s)$ for truncated moment sequences. A $k$-atomic representing measure is also called a (\emph{Gauss}) \emph{quadrature rule} \cite{gauss15}. Two questions arise naturally in theory and applications:
\begin{enumerate}[(a)]
\item How many atoms $\cat(L)$ are required to represent $L$?

\item Where are the atoms $x_i$ located in a representation of $L$?
\end{enumerate}
Very recent studies about the Carath\'eodory number $\cat(L)$ are \cite{rienerOptima,didio17Cara,didio21HilbertFunction} and the \emph{set of atoms} (or \emph{core variety}) is studied in \cite{fialkow17,didio17w+v+,blekhe20}.

All the studies and references given so far have one thing in common: They study \emph{static} moments, i.e., $s$ is fixed in these studies and properties are only derived from and for $s$. Hence, the study of the \emph{moment cone} $\cS_\sA$ (= set of all moment sequences $s$) is only pointwise and collecting or even connecting moment sequences with the same properties in the moment cone is difficult and does not arise naturally.

\subsection{Time-dependent Moments}

Let $n,m\in\nset$. We denote by $C_b^\infty(\rset^n,\rset^m)$ the set of \emph{smooth bounded functions}
\[C_b^\infty(\rset^n,\rset^m) := \left\{f\in C^\infty(\rset^n,\rset^m) \,\middle|\, \|\partial^\alpha f\|_\infty < \infty\ \text{for all}\ \alpha\in\nset_0^n\right\}\]
and by $\cS(\rset^n,\rset^m)$ the \emph{Schwartz functions}
\[\cS(\rset^n,\rset^m) := \left\{f\in C^\infty(\rset^n,\rset^m) \,\middle|\, \|x^\alpha \partial^\beta f\|_\infty < \infty\ \text{for all}\ \alpha,\beta\in\nset_0^n\right\}.\]
By $C^d([0,\infty),C_b^\infty(\rset^n,\rset^m))$ we denote all functions $f:\rset^n\times[0,\infty)\to\rset^m$ such that
\begin{enumerate}[(i)]
\item $f(\,\cdot\,,t), \partial_t f(\,\cdot\,,t), \dots, \partial_t^d f(\,\cdot\,,t)\in C_b^\infty(\rset^n,\rset^m)$ for all $t\geq 0$ and
\item $\partial^\alpha f(x,\,\cdot\,)\in C^d([0,\infty),\rset^m)$ for all $x\in\rset^n$ and $\alpha\in\nset_0^n$.
\end{enumerate}
Let $d\in\nset_0$, $\nu = (\nu_1,\dots,\nu_n)^T\in [0,\infty)^n$, $\nu\cdot\Delta = \nu_1\cdot\partial_1^2 + \dots + \nu_n\cdot\partial_n^2$ be the anisotropic Laplace operator, $g = (g_1,\dots,g_n) \in C^d([0,\infty),C_b^\infty(\rset^n,\rset^n))$ be a smooth bounded vector field, $h = (h_{i,j})_{i,j=1}^m\in C^d([0,\infty), C_b^\infty(\rset^n,\rset^{m\times m}))$ be a smooth bounded matrix function, $k = (k_1,\dots,k_m)^T \in\cS(\rset^n,\rset^m)$ be a Schwartz function vector-valued function, and $a\in\rset^n$ be a vector.

Then by \cite[Thm.\ 2.10]{didio19ENS} the initial value problem
\begin{equation}\label{eq:pde}
\begin{split}
\partial_t f(x,t) &= \nu\Delta f(x,t) + [ax + g(x,t)]\cdot\nabla f(x,t) + h(x,t)\cdot f(x,t) + k(x,t)\\
f(x,0) &= f_0(x) \in \cS(\rset^n,\rset^m)
\end{split}
\end{equation}
with $(ax + g)\cdot\nabla := (a_1 x_1 + g_1)\cdot\partial_1 + \dots + (a_n x_n + g_n)\cdot\partial_n$ has a unique solution $f\in C^{d+1}([0,\infty),\cS(\rset^n,\rset^m))$. Additionally, for $m=1$ and $f_0\geq 0$ we have that $f(\,\cdot\,,t)\geq 0$ for all $t\geq 0$.

For $f_0\in\cS(\rset^n)$ with $f_0\geq 0$ we can calculate the moments
\[s_\alpha(0) := \int_{\rset^n} x^\alpha\cdot f_0(x)~\diff x\]
for all $\alpha = (\alpha_1,\dots,\alpha_n)\in\nset_0^n$. Since the time-dependent solution $f$ of (\ref{eq:pde}) is unique, Schwartz function valued, and non-negative for $m=1$ and $f_0\geq 0$, we find that the solution $f$ induces unique time-dependent moments
\begin{equation}\label{eq:timedepMom}
s_\alpha(t) := \int_{\rset^n} x^\alpha\cdot f(x,t)~\diff x
\end{equation}
for all $t\geq 0$.

The time-dependent moments can be defined for nonlinear partial differential equations as well. Let us look at a (non-linear) example. In \cite[Lem.\ 3.6]{didio19ENS} for Burgers' equation
\[\partial_t f(x,t) = - f(x,t)\cdot\partial_x f(x,t)\]
we calculated the time-dependent moments
\[s_{k,p}(t) := \int_\rset x^k\cdot f(x,t)^p~\diff x\]
of $f(x,t)^p$ for all $k,p\in\nset_0$ and found the explicit expression
\begin{equation}\label{eq:burgerMoments}
s_{k,p}(t) = \sum_{i=0}^k \frac{s_{k-i,p+i}(0)}{i!}\cdot t^i\cdot \prod_{j=0}^{i-1} \frac{(p+j)\cdot (k-j)}{1+(p+j)^2} \quad\in\rset[t]
\end{equation}
which depends only on the initial values $s_{k-i,p+i}(0)$ of $f_0$. Hence, despite the fact that for any non-constant $f_0\in\cS(\rset)$ the classical solution of Burgers' equation breaks down in finite time, the moments are \emph{a priori} known for all times $t\in\rset$. In \cite[Exm.\ 3.7]{didio19ENS} we then calculate for the one-tooth-function
\[f_0(x) := \begin{cases} 1+x & \text{for}\ x\in [-1,0],\\ 1-x & \text{for}\ x\in [0,1]\\ 0 & \text{else} \end{cases} \quad\geq 0\]
the time-dependent moments $s_{0,1}(t)$, $s_{1,1}(t)$, and $s_{2,1}(t)$ to find
\begin{equation}\label{eq:burgerBreakDown}
\int_\rset (x-t)^2\cdot f(x,t)~\diff x = L_{s(t)}((x-t)^2) = \frac{1}{6} - \frac{2}{15}t^2 \quad\xrightarrow{t\to\pm\infty}\quad -\infty.
\end{equation}
Applying a molifier $S_\varepsilon$ to $f_0$ gives $S_\varepsilon f_0\in\cS(\rset)$ for all $\varepsilon>0$ and (\ref{eq:burgerBreakDown}) changes continuously with $\varepsilon>0$. Then (\ref{eq:burgerBreakDown}) holds at least for the time of the existence of the classical solution. But the classical solution remains non-negative which contradicts (\ref{eq:burgerBreakDown}), i.e., Burgers' equation breaks down in finite time. In other words, Burgers' equation as a (non-linear) transport equation, starting with a non-negative $f_0$ means that the classical solution is also non-negative as long as it exist. While from (\ref{eq:burgerMoments}) we see that \emph{a priori} the moments might exist for all times, the derivation of (\ref{eq:burgerMoments}) requires that $f(x,t)$ is a classical solution, i.e., smooth and even Schwartz. The break down of the classical solution of Burgers' equation in finite time is therefore not observed through the moments becoming infinite (i.e., they stop to exist), but through the non-negativity property of the representing measure which is encoded in the moments. For some $t>0$ we find a non-negative polynomial $p\in\rset[x]$ such that $L_{s(t)}(p)<0$, i.e., (\ref{eq:burgerBreakDown}) shows that non-negativity is not preserved for all times and therefore the classical solution $u$ does not exist for all times.

The Burgers' equation example has two nice features:
\begin{enumerate}[(a)]
\item the time-dependent moments are polynomial in time: $s_{k,p}(t)\in\rset[t]$, and

\item the example to contradict non-negativity of the moments only needs moment up to degree 2, i.e., $0$th, $1$st, and $2$nd moments.
\end{enumerate} 
Studying (\ref{eq:pde}) with general $\nu=(\nu_1,\dots,\nu_n)\in [0,\infty)^n$, $C_b^\infty$-functions $g_i$ and $h_{i,j}$, and Schwartz functions $k_i$ it is evident that neither (a) nor (b) need to be satisfied anymore. I.e.\ we can in general not hope for the time-dependent moments $s_\alpha(t)$ to be polynomial in time, depending only on the initial values $s_\alpha(0)$, and to observe certain properties we no longer can rely on finitely many moments $s_\alpha(t)$. We have to explore the computational and theoretical limits of time-dependent moments $s_\alpha(t)$. That is, among other things, one purpose of this study. We say that a (moment) sequence $s$ \emph{evolves with respect to} or \emph{along the partial differential equation (\ref{eq:pde})} if a representing measure of $s$ evolves with respect to (\ref{eq:pde}). Hence, in principle the time-evolution $s(t)$ depends on the choice of representing measure $\mu_0$ of $s(0)$ if $s(0)$ is \emph{indeterminate}, i.e., $s(0)$ has more than one representing measure.

The time-dependent moments $s(t)$ from the heat equation $\partial_t f = \Delta f$ were studied extensively in \cite{curtoHeat22}. In \cite{magron20,marx20,korda22,korda22arxiv} moments also have been applied to (non-linear) PDEs. In the present work we proceed the study \cite{curtoHeat22} and go beyond the heat equation. The paper is structured as follows.

In the next section (\Cref{sec:dual}) we describe for (\ref{eq:pde}) with $k=0$ the dual action on the polynomials.
We describe the dual action by the dual operator of (\ref{eq:pde}) and show that polynomials in general are no longer polynomials for $t>0$ but at least remain in an algebra $\pol(\rset^n)$ (see (\ref{eq:polyDef}) for the definition).
For $\nu\Delta + g\nabla + h$ with $g(x,t) = g(t)$ and $h(x,t)=h(t)$ we can solve the dual action analytically (see \Cref{cor:dualP}).
We find that $g$ and $h$ have no effect on non-negativity and therefore study the (dual) action of the Laplace operator on polynomials in \Cref{sec:nonnegpoly} more closely.
We give a simple way to solve the polynomial heat equation in \Cref{thm:polyHeatEqSolution}.
We collect several results of this action and also present several specific results on non-negative polynomials.
We show several examples where non-negative polynomials become sums of squares under the heat equation and also give counter examples, when a non-negative polynomial which is not a sum of squares does not become a sum of squares.
In \Cref{thm:pos34heat} we show for non-negative polynomials $f\in\rset[x,y,z]_{\leq 4}$ that under the heat equation they become sum of squares in finite time.
I.e., every non-negative polynomial in $\rset[x,y,z]_{\leq 4}$ is generated by taking a sum of squares in $\rset[x,y,z]_{\leq 4}$ and evolving it along the heat equation with negative times.
All our examples of non-negative polynomials in $\rset[x,y]_{\leq 2d}$ with $d\in\nset_0$ show that they also become sum of squares in finite time.
This observation leads us to \Cref{open1}.
In \Cref{sec:atoms} we investigate the application of (\ref{eq:pde}) with $\nu=0$ and $k=0$ to atomic measures.
We find that since $g(x,t)\cdot\nabla$ and $h(x,t)$ in general do not commute and therefore a solution can not be constructed from solutions of the individual operators, for atomic measures we can at first solve the time-evolution with respect to the transport operator $g(x,t)\cdot\nabla$ and then apply the scaling operator $h(x,t)$.
We find that the number of atoms is unchanged. We describe the time-evolution of $\mu_t = \sum_{i=1}^k c_i(t)\cdot \delta_{x_i(t)}$ in \Cref{lem:atomMovement} and \Cref{thm:atomMovements}.
For the full moment problem we show that the time-dependent moment sequence (functional) remains a boundary of the moment cone for all times.
For the truncated moment problem we show that we can enter the interior of the moment cone and that the time-evolution in general depends on the representing measure.
In \Cref{sec:summary} we summarize the results, give final discussions, and state the open problem.

\section{The dual of a partial differential equation acting on polynomials}
\label{sec:dual}

By \cite[Thm.\ 2.10]{didio19ENS} we know that (\ref{eq:pde}) has a unique solution $f\in C^{d+1}([0,\infty),$ $\cS(\rset^n,\rset^m))$ and therefore the time-dependent moments $s_\alpha(t)$ in (\ref{eq:timedepMom}) exist for all times $t\in [0,\infty)$. In this section we show that like in the heat equation \cite{curtoHeat22} the action of the operator $A = \nu\cdot\Delta + g(x,t)\nabla + h(x,t)$ has a dual action $A^*$ acting on the polynomials, i.e., the time-dependency of the solution $f(x,t)$ is moved to a time-dependency of $p(x,t)$:
\[\int p_0(x)\cdot f(x,t)~\diff x = \int p(x,t)\cdot f_0(x)~\diff x\]
for all $t\in [0,\infty)$. We will see in \Cref{thm:dualP} that while $p(x,0)\in\rset[x_1,\dots,x_n]$, we have in general $p(x,t)\not\in\rset[x_1,\dots,x_n]$ for any $t\neq 0$. We have to introduce the following space of \emph{at most polynomially increasing functions $\pol(\rset^n)$} on $\rset^n$:
\begin{equation}\label{eq:polyDef}\begin{split}
\pol(\rset^n) := \{f\in C^\infty(\rset^n) \,|\,& \text{for all}\ \alpha\in\nset_0^n\ \text{there exists}\ p_\alpha\in\rset[x]\ \text{such}\\
&\text{that}\ |\partial^\alpha f(x)| \leq p_\alpha(x)\ \text{for all}\ x\in\rset^n\}.
\end{split}\end{equation}
We collect some simple properties of $\pol(\rset^n)$.

\begin{lem}
Let $n\in\nset$. Then the following hold:
\begin{enumerate}[i)]
\item $\pol(\rset^n)$ is an algebra.

\item $\cS(\rset^n)\subsetneq \pol(\rset^n)$.

\item $\rset[x_1,\dots,x_n]\subsetneq \rset[x_1,\dots,x_n] + C_b^\infty(\rset^n) \subsetneq \pol(\rset^n) \subsetneq \cS(\rset^n)'$.
\end{enumerate}
\end{lem}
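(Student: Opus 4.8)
The plan is to verify each containment and each strictness claim in turn, all of which reduce to elementary estimates on derivatives. The first item, that $\pol(\rset^n)$ is an algebra, follows from the Leibniz rule: if $f,g\in\pol(\rset^n)$ and $\alpha\in\nset_0^n$, then $\partial^\alpha(fg) = \sum_{\beta\leq\alpha}\binom{\alpha}{\beta}\partial^\beta f\cdot\partial^{\alpha-\beta}g$, and each summand is bounded in absolute value by a product of two polynomials, hence by a polynomial; closure under addition and scalar multiplication is immediate. I would also note $1\in\pol(\rset^n)$, so it is a unital algebra.

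For item ii), the inclusion $\cS(\rset^n)\subseteq\pol(\rset^n)$ is trivial since a Schwartz function has all derivatives bounded, hence dominated by the constant polynomial $p_\alpha \equiv \|\partial^\alpha f\|_\infty$. Strictness is witnessed by any nonzero constant function, which lies in $\pol(\rset^n)$ but not in $\cS(\rset^n)$; alternatively $f(x)=1$ works, or indeed any polynomial of positive degree.

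For item iii), I would handle the chain of four inclusions left to right. The containment $\rset[x_1,\dots,x_n]\subseteq\rset[x_1,\dots,x_n]+C_b^\infty(\rset^n)$ is obvious (add $0$), and strictness follows from any nonconstant bounded smooth function, e.g.\ $x\mapsto\sin x_1$, which is not a polynomial. Next, $\rset[x_1,\dots,x_n]+C_b^\infty(\rset^n)\subseteq\pol(\rset^n)$: a sum $p+b$ with $p$ polynomial and $b\in C_b^\infty$ has $\partial^\alpha(p+b)=\partial^\alpha p+\partial^\alpha b$, bounded by $|\partial^\alpha p(x)|+\|\partial^\alpha b\|_\infty$, which is a polynomial; strictness is seen by exhibiting an element of $\pol(\rset^n)$ whose derivatives grow without being expressible as polynomial-plus-bounded, for instance $f(x)=x_1\sin x_1$ (or $x_1\cos x_1$): here $f$ itself grows linearly and $f'=\sin x_1+x_1\cos x_1$ is unbounded, so $f$ is not of the form $p+b$ with $b$ bounded, yet $|\partial^\alpha f|$ is dominated by a polynomial of degree $\leq 1$ for every $\alpha$. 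Finally, $\pol(\rset^n)\subseteq\cS(\rset^n)'$: every $f\in\pol(\rset^n)$ is a smooth function of polynomial growth, hence defines a tempered distribution via $\varphi\mapsto\int_{\rset^n}f\varphi\,\diff x$, the integral converging absolutely because $\varphi\in\cS(\rset^n)$ decays faster than any polynomial; strictness holds because $\cS(\rset^n)'$ contains the Dirac delta $\delta_0$ (or any distribution of positive order, or any non-smooth $L^1_{\mathrm{loc}}$ function of polynomial growth such as $|x_1|$), which is not a smooth function and so not in $\pol(\rset^n)$.

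None of the steps presents a genuine obstacle; the only points requiring a moment's care are the strictness witnesses for the two middle inclusions in iii). The subtle one is separating $\pol(\rset^n)$ from $\rset[x_1,\dots,x_n]+C_b^\infty(\rset^n)$: one must produce a function all of whose derivatives are polynomially bounded but which fails the more rigid decomposition, and the example $x_1\sin x_1$ does this precisely because the oscillation lets every derivative stay within degree $1$ growth while the first derivative is already unbounded, ruling out the $C_b^\infty$ remainder. Everything else is bookkeeping with the Leibniz rule and the definition of tempered distributions.
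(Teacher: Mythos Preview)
Your approach matches the paper's: items (i) and (ii) are immediate, and for (iii) the paper gives the same strictness witnesses up to a minor variation ($\sin x_1$ for the first inclusion, $x_1^2\sin x_1$ rather than your $x_1\sin x_1$ for the second, and declares the last ``clear'').

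There is, however, a small logical gap in your argument for the middle strictness. You write that $f(x)=x_1\sin x_1$ ``grows linearly and $f'=\sin x_1+x_1\cos x_1$ is unbounded, so $f$ is not of the form $p+b$ with $b$ bounded.'' The unboundedness of $f'$ by itself does not rule out such a decomposition: after all, $x_1^2$ has unbounded derivative and is trivially of the form polynomial plus bounded. What makes the argument work is the step you omit: the linear growth of $f$ forces any candidate polynomial $p$ to have degree at most $1$, hence $\partial_1 p$ is constant; then $\partial_1 b = f' - \partial_1 p$ would be unbounded, contradicting $b\in C_b^\infty(\rset^n)$. Alternatively, one can argue directly that $x_1\sin x_1 - p(x)$ is unbounded for every affine $p$ by evaluating along $x_1 = k\pi$ and $x_1 = (2k+\tfrac12)\pi$. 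Either way, the missing sentence is the degree bound on $p$; once that is inserted your proof is complete.
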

\begin{proof}
(i) and (ii) follow immediately from the definition of $\pol(\rset^n)$ in (\ref{eq:polyDef}). For (iii) all inclusions $\subseteq$ are clear, it remains to show that the inclusions are proper $\subsetneq$. For the first $\subsetneq$ we have $\sin x_1\in [\rset[x_1,\dots,x_n] + C_b^\infty(\rset^n)]\setminus\rset[x_1,\dots,x_n]$, for the second $\subsetneq$ we have $x_1^2\cdot\sin x_1\in \pol(\rset^n)\setminus [\rset[x_1,\dots,x_n] + C_b^\infty(\rset^n)]$, and the last $\subsetneq$ is clear.
\end{proof}

Let $p\in\pol(\rset^n)$ and $f$ be the unique solution of
\begin{align*}
\partial_t f(x,t) &= \nu\Delta f(x,t) + g(x,t)\cdot\nabla f(x,t) + h(x,t)\cdot f(x,t)\\
f(x,0) &= f_0(x)
\end{align*}
with $f_0\in\cS(\rset^n)$, $g=(g_1,\dots,g_n)^T\in C([0,\infty),C_b^\infty(\rset^n,\rset))$, and $h\in C([0,\infty)$, $C_b^\infty(\rset^n,\rset))$. Then
\begin{align*}
\partial_t & \int p(x)\cdot f(x,t)~\diff x\\
&= \int p(x) \cdot [\nu\Delta + g(x,t)\cdot\nabla + h(x,t)] f(x,t)~\diff x
\intertext{is by partial integration (since $p\in\pol(\rset^n)$ and $f\in\cS(\rset^n)$)}
&= \int f(x,t)\cdot [\nu\Delta + g(x,t)\cdot\nabla - \divv g(x,t) + h(x,t)]p(x)~\diff x
\end{align*}
The following result shows, that the time evolution of $f$ can be shifted to $p$. The proof uses techniques of the semi-group approximations of the solution $f$.

\begin{thm}\label{thm:dualP}
Let $d\in\nset_0$, $\nu\geq 0$, $g=(g_1,\dots,g_n)^T\in C^d([0,\infty),C_b^\infty(\rset^n)^n)$, and $h\in C^d([0,\infty), C_b^\infty(\rset^n))$. Let $f\in C^{d+1}([0,\infty), \cS(\rset^n))$ be the unique solution of
\begin{equation}\label{eq:pdeSimpleThm}
\begin{split}
\partial_f(x,t) &= \nu\Delta f(x,t) + g(x,t)\cdot\nabla f(x,t) + h(x,t)\cdot f(x,t)\\
f(x,0) &= f_0(x)
\end{split}
\end{equation}
with $f_0\in\cS(\rset^n)$. Let $T>0$. Then the unique solution $p_T$ of
\begin{equation}\label{eq:dualPpde}
\begin{split}
\partial_t p_T(x,t) &= \nu\Delta p_T(x,t) - g(x,T-t)\cdot\nabla p_T(x,t)\\
&\qquad + (h(x,T-t)- \divv g(x,T-t))\cdot p_T(x,t)\\
p(x,0) &= p_0(x)\in\pol(\rset^n)
\end{split}
\end{equation}
fulfills $p_T\in C^{d+1}([0,T],\pol(\rset^n))$, and we have
\begin{equation}\label{eq:dualPaction}
\int p_0(x)\cdot f(x,T)~\diff x = \int p_T(x,T)\cdot f_0(x)~\diff x.
\end{equation}
\end{thm}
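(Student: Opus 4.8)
The plan is to exploit the "differentiate the pairing and integrate by parts" computation displayed just before the theorem, but to make it rigorous by working with the semigroup/evolution-family approximations of the two solutions rather than with the classical PDEs directly. First I would fix $T>0$ and, for $t\in[0,T]$, consider the quantity
\[
\Phi(t) := \int_{\rset^n} p_T(x,t)\cdot f(x,T-t)~\diff x.
\]
The goal is to show $\Phi$ is constant on $[0,T]$; evaluating at $t=0$ gives $\int p_0(x)f(x,T)~\diff x$ and at $t=T$ gives $\int p_T(x,T)f_0(x)~\diff x$, which is exactly \eqref{eq:dualPaction}. Formally,
\[
\Phi'(t) = \int (\partial_t p_T)(x,t)\,f(x,T-t)~\diff x - \int p_T(x,t)\,(\partial_t f)(x,T-t)~\diff x,
\]
and substituting the two evolution equations \eqref{eq:dualPpde} and \eqref{eq:pdeSimpleThm} (the latter evaluated at time $T-t$, so the coefficients are $g(x,T-t)$, $h(x,T-t)$) and integrating by parts in the $f$-term, every term cancels: the $\nu\Delta$ pieces cancel because $\Delta$ is formally self-adjoint, the transport pieces $\pm g\cdot\nabla$ pair against each other up to the divergence term $-\divv g$, which is precisely the extra term put into \eqref{eq:dualPpde}, and the $h$ pieces cancel directly. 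The integration by parts produces no boundary terms because $f(\,\cdot\,,T-t)\in\cS(\rset^n)$ and $p_T(\,\cdot\,,t)\in\pol(\rset^n)$, so all products and their derivatives are dominated by (Schwartz)$\times$(polynomial), which decays.

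Before the cancellation argument can even be run, I need to establish existence, uniqueness, and regularity of $p_T$ in the stated class $C^{d+1}([0,T],\pol(\rset^n))$. For this I would invoke \cite[Thm.\ 2.10]{didio19ENS} in the same way it is used for \eqref{eq:pde}: equation \eqref{eq:dualPpde} is again of the form $\partial_t p = \nu\Delta p + \tilde g(x,t)\cdot\nabla p + \tilde h(x,t)\cdot p$ with $\tilde g(x,t) := -g(x,T-t)$ and $\tilde h(x,t) := h(x,T-t) - \divv g(x,T-t)$, and the time-reflection $t\mapsto T-t$ preserves the $C^d$-in-time, $C_b^\infty$-in-space hypotheses on the coefficients. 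The only genuine difference from \eqref{eq:pde} is that the initial datum $p_0$ lies in $\pol(\rset^n)$ rather than $\cS(\rset^n)$; I would handle this either by a direct adaptation of the Duhamel/heat-semigroup construction in \cite{didio19ENS} — noting that the Gaussian semigroup $e^{t\nu\Delta}$ maps $\pol(\rset^n)$ into $\pol(\rset^n)$ (a Gaussian convolution of a function dominated by a polynomial is again dominated by a polynomial of the same degree), and that the transport and multiplication terms with $C_b^\infty$ coefficients also preserve $\pol(\rset^n)$ — or by the density/truncation device already suggested in the excerpt ("techniques of the semi-group approximations"): approximate $p_0$ by $\cS(\rset^n)$-functions, solve, and pass to the limit using the polynomial growth bounds to control the pairing against the Schwartz function $f_0$. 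This step, establishing the invariance of $\pol(\rset^n)$ and the corresponding a priori estimates, is where the real work lies.

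With the well-posedness and invariance of $\pol(\rset^n)$ in hand, the remaining point is to justify differentiating $\Phi$ under the integral sign and the termwise integration by parts. Here I would use the approximation scheme: let $p_T^{(\varepsilon)}$ and $f^{(\varepsilon)}$ be the approximate solutions furnished by the (dis)semigroup construction (e.g.\ via Trotter-type splitting into the $\nu\Delta$, $g\cdot\nabla$, and $h$ pieces, or via a time-discretized Duhamel iteration), for which the corresponding $\Phi^{(\varepsilon)}$ is manifestly piecewise-$C^1$ and the cancellation is exact at each step; then pass to the limit $\varepsilon\to 0$. The dominated-convergence hypotheses are met because the approximants converge in the relevant seminorms of $\cS(\rset^n)$ and $\pol(\rset^n)$ uniformly on $[0,T]$, and each integrand is bounded by an integrable function of the form (fixed Schwartz seminorm of $f_0$ or $f$)$\times$(fixed polynomial). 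Alternatively, once $p_T\in C^{d+1}([0,T],\pol(\rset^n))$ and $f\in C^{d+1}([0,\infty),\cS(\rset^n))$ are known with $d\ge 0$, the map $t\mapsto \Phi(t)$ is genuinely $C^1$ and the differentiation/integration-by-parts are legitimate directly, with no approximation needed — so the cleanest writeup establishes the regularity first and then simply computes $\Phi'\equiv 0$. The main obstacle, to reiterate, is not the algebraic cancellation (which is routine and already displayed) but verifying that \eqref{eq:dualPpde} is well-posed in $\pol(\rset^n)$ with the claimed time regularity; everything else follows by the self-adjointness of $\Delta$, the adjoint identity $\int p\,(g\cdot\nabla f) = \int f\,(-g\cdot\nabla p - (\divv g)\,p)$, and the absence of boundary terms.
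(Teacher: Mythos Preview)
Your proposal is correct and, at the level of the key cancellation, identical to what the paper does; but the organization and emphasis are genuinely different. The paper's proof is constructive: it writes the solution $f(\,\cdot\,,T)$ as a Trotter product $\lim_{N\to\infty}\prod_{i=1}^N \exp(\int_{t_{i-1}}^{t_i} A(x,s)\,\diff s)f_0$, moves each factor across the pairing with $p_0$ by replacing $A$ with $A^*$, and \emph{defines} $p_T(x,T)$ as the resulting limit $\lim_{N\to\infty}\prod_{i=N}^1 \exp(\int_{t_{i-1}}^{t_i} A^*(x,s)\,\diff s)p_0$; the reversed order of the product is what forces the time-reversal $t\mapsto T-t$ in \eqref{eq:dualPpde}. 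Your route instead takes well-posedness of \eqref{eq:dualPpde} in $\pol(\rset^n)$ as a separate input and then proves the identity by showing $\Phi(t)=\int p_T(x,t)f(x,T-t)\,\diff x$ has $\Phi'\equiv 0$. Your version is more modular and closer to the standard ``energy'' argument for adjoint equations; the paper's version has the advantage that the time-reversal drops out mechanically from non-commutativity of the ordered product rather than having to be guessed in advance, and it simultaneously constructs $p_T$. Both proofs are equally sketchy on the one genuinely delicate point you flag, namely the invariance of $\pol(\rset^n)$ under the dual evolution; the paper simply asserts it after testing against all $f_0\in C_0^\infty(\rset^n)$, so your explicit discussion of why the heat semigroup, $C_b^\infty$-transport, and $C_b^\infty$-multiplication each preserve $\pol(\rset^n)$ is if anything an improvement.
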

\begin{proof}
Note, the solution $f$ of (\ref{eq:pdeSimpleThm}) is unique since it is unique on any open bounded set $U\subset\rset^n$, see e.g.\ \cite[Ch.\ 7]{evans10}, and we have $f\in C^{d+1}([0,\infty),\cS(\rset^n))$, see e.g.\ \cite{didio19ENS}. Let $N\in\nset$ and $\cZ_N = \{t_0=0 < t_1 < \dots < t_N = T\}$ be a decomposition of $[0,T]$. Then for the operator
\[A(x,t) = \nu\Delta + g(x,t)\cdot\nabla + h(x,t)\]
we have the dual operator
\[A^*(x,t) = \nu\Delta -g(x,t)\cdot\nabla - \divv g(x,t) + h(x,t).\]
Note, of course, we actually also have to give the domain of the operators $A$ and $A^*$. We are working for $A$ on $\cS(\rset^n)$ and for $A^*$ it is therefore sufficient to work on $\pol(\rset^n)$. We only have to ensure that $p_T\in\pol(\rset^n)$.

The unique solution $f$ can be approximated in $\cS(\rset^n)$ by the semigroup approach (Trotter \cite{trotter59})
\begin{equation}\label{eq:expApprox}
f(x,T) = \lim_{N\to\infty} \prod_{i=1}^N \exp\left( \int_{t_{i-1}}^{t_i}\!\!\!\!\! A(x,s)~\diff s\right) f_0(x).
\end{equation}
For any operator $B$ we have for the dual $B^*$ the relation $\langle f, Bg\rangle = \langle B^* f, g\rangle$ and for exponentials $\exp(B)$ we have $\langle f, \exp(B) g\rangle = \langle \exp(B^*) f, g\rangle$. When we apply these to (\ref{eq:expApprox}) we have to pay attention at the order of the operators, since in general they do not commute and are additionally time-dependent. We use the order $\prod_{i=1}^N B_i = B_N B_{N-1} \cdots B_1$ in this formulas. Hence, we get
\begin{align*}
\int_{\rset^n} p_0(x)\cdot f(x,T)~\diff x
&= \lim_{N\to\infty} \int_{\rset^n} p_0(x)\cdot \prod_{i=1}^N \exp\left( \int_{t_{i-1}}^{t_i}\!\!\!\!\! A(x,s)~\diff s\right) f_0(x)~\diff x\\
%
%
&= \lim_{N\to\infty} \int_{\rset^n} \left[ \prod_{i=N}^1 \exp\left( \int_{t_{i-1}}^{t_i}\!\!\!\!\! A^*(x,s)~\diff s\right) p_0(x)\right]\cdot  f_0(x)~\diff x\\
&= \int_{\rset^n} p_T(x,T)\cdot f_0(x)~\diff x.
\end{align*}
The last equality holds in the same way as (\ref{eq:expApprox}). We set 
\[p_T(x,T) := \lim_{N\to\infty} \prod_{i=N}^1 \exp\left( \int_{t_{i-1}}^{t_i}\!\!\!\!\! A^*(x,s)~\diff s\right) p_0(x)\]
and see that $p_T$ solves (\ref{eq:dualPaction}) since the last equality holds for all $f_0\in\cS(\rset^n)$, i.e., especially for all test functions $f_0\in C_0^\infty(\rset^n)\subset\cS(\rset^n)$. This indeed shows that $p_T\in\pol(\rset^n)$ and the time-dependency of $A(x,t)$ combined with the fact that $A(x,t)$ and $A(x,t')$ for $t\neq t'$ in general do not commute shows that we have to take the reverse order in the operator product, i.e., $p_T(x,t)$ solves by substituting $t\mapsto T-t$ in $A^*$ the equation $\partial_t p_T(x,t) = A^*(x,T-t)p_T(x,t)$.
\end{proof}

\begin{rem}
\Cref{thm:dualP} holds with the same proof also for the anisotropic Laplace operator $\nu\cdot\Delta = \nu_1\partial_1^2 + \dots + \nu_n\partial_n^2$ with $\nu = (\nu_1,\dots,\nu_n)\in [0,\infty)^n$. Additionally, let $M\in\rset^{n\times n}$ be a symmetric positive-definite matrix. Then by a change of coordinates the operator $(\partial_1,\dots,\partial_n) M (\partial_1,\dots,\partial_n)$ can be diagonalized to the anisotropic Laplace operator $\nu\cdot\Delta$. \exmsymbol
\end{rem}

\begin{rem}
In \Cref{thm:dualP} we have seen that the action of (\ref{eq:pde}) on $f_0$ is shifted to $p_0$. But since the dual action on $p_0$ is independent on $f_0$, it also provides the action (\ref{eq:pde}) on measures $\mu_0$ instead of functions $f_0$. This provides a way to study the set of atoms in \Cref{sec:atoms}.\exmsymbol
\end{rem}

\begin{rem}
In the proof of \Cref{thm:dualP} note that $A = \nu\Delta + g\nabla + h$ is in general an unbounded operator, i.e., it is not defined on all $L^2(\rset^n)$. Since $\partial_t f = Af$ with $f_0\in\cS(\rset^n)$ has a unique Schwartz function solution it is therefore sufficient to take the domain of $A$ as $\cD(A) = \cS(\rset^n)$. This very restrictive domain enables us in \Cref{thm:dualP} to have $\pol(\rset^n)\subseteq \cD(A^*)$. Here, the restriction that $g$ and $h$ are $C_b^\infty$-functions is essential. It is not sufficient to have $A$ such that $Af\in\cS(\rset^n)$ for any $f\in\cS(\rset^n)$. E.g.\ take $\nu=0$, $g=0$ and $h=x^4$ with $f_0(x)=e^{-x^2}$. Then $\partial_t f(x,t) = x^4\cdot f(x,t)$ has as an ordinary differential equation in $t$ with fixed $x$ the unique solution $f(x,t) = e^{-x^2 + t\cdot x^4}$, i.e., $f(\,\cdot\,,t)\notin\cS(\rset^n)$ for any $t>0$. Additionally note, that in \cite[Thm.\ 2.2]{denk19} a condition on the generator $A$ is given such that the semi-group $e^{A t}$ maps the Schwartz class $\cS$ to the Schwartz class $\cS$.\exmsymbol
\end{rem}

We have seen in the previous proof that the time-reversal for the dual equation acting on $p_0$ appears because $\Delta$, $g(x,t)\nabla$, and $h(x,t)$ do not commute (pairwise) in general. If $g$ and $h$ are time-independent, then of course the time-reversal disappears naturally since no time-dependency exists. Another way the time-reversal disappears is, when $\Delta$, $g(x,t)\nabla$, and $h(x,t)$ commute (pairwise), e.g.\ when $g$ and $h$ do not depend on $x$. We then have the following explicit solution for $p_t(x,t)$. We denote by $\Theta_{t}$ the heat kernel, i.e.,
\[\Theta_{t}(x) = \frac{1}{(4\pi t)^{n/2}}\cdot \exp\left(\frac{-\|x\|^2}{4t}\right)\]
for all $t>0$.

\begin{cor}\label{cor:dualP}
Let $n\in\nset_0$, $\nu\geq 0$, $g=(g_1,\dots,g_n)$, and  $g_1,\dots,g_n,h\in C^d([0,\infty),\rset)$. Then for the dual action (\ref{eq:dualPpde}) and (\ref{eq:dualPaction}) in \Cref{thm:dualP} we have
\[p_t(x,t) = e^{H(t)}\cdot [\Theta_{\nu t}*p_0](x+ G(t))\]
with $\Theta_{\nu t}$ the heat kernel, $G(t) := \int_0^t g(t)~\diff s$, and $H(t) := \int_0^t h(s)~\diff s$.
\end{cor}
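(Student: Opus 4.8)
The plan is to verify directly that the claimed formula satisfies the PDE~(\ref{eq:dualPpde}) (with the time-reversal absent because $g,h$ are $x$-independent) and the initial condition, and then invoke uniqueness. Write $q(x,t) := e^{H(t)}\cdot [\Theta_{\nu t}*p_0](x+G(t))$, where for $\nu=0$ one reads $\Theta_{0}* p_0 = p_0$ and for $t=0$ one reads $\Theta_0 = \delta_0$, so that $q(x,0) = p_0(x)$ as required. First I would check that $q$ stays in the right space: since $p_0\in\pol(\rset^n)$, the convolution $\Theta_{\nu t}*p_0$ is a $C^\infty$ function that is still at most polynomially increasing (a Gaussian average of an at-most-polynomially-increasing function grows no faster, because the Gaussian has moments of all orders), multiplication by the scalar $e^{H(t)}$ and the shift $x\mapsto x+G(t)$ preserve $\pol(\rset^n)$, and the $C^{d+1}$-in-$t$ regularity follows from $g_j,h\in C^d([0,\infty),\rset)$ together with smoothness of the heat semigroup in $t>0$ and continuity up to $t=0$.

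Next I would carry out the differentiation. The three ingredients factor cleanly: $\partial_t e^{H(t)} = h(t)e^{H(t)}$, which produces the zeroth-order term $h(t)\cdot q$; the heat kernel satisfies $\partial_t \Theta_{\nu t} = \nu\Delta\Theta_{\nu t}$, so that $\partial_t(\Theta_{\nu t}*p_0) = \nu\Delta(\Theta_{\nu t}*p_0)$ and, after the shift, contributes $\nu\Delta q$; and the chain rule applied to the shift $x+G(t)$ with $G'(t)=g(t)$ contributes $g(t)\cdot\nabla q$. Summing, $\partial_t q = \nu\Delta q + g(t)\cdot\nabla q + h(t)\cdot q$, which is exactly~(\ref{eq:dualPpde}) once one notes that $\divv g \equiv 0$ here (each $g_j$ depends only on $t$) and the time argument $T-t$ reduces to $t$ by the commutativity remark preceding the corollary. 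Hence $q$ solves the same initial value problem as $p_t(x,t)$, and uniqueness (guaranteed by \Cref{thm:dualP}, or directly by the argument in its proof together with uniqueness on bounded domains) forces $p_t(x,t) = q(x,t)$, and in particular evaluating at $t$ arbitrary gives the stated closed form; plugging into~(\ref{eq:dualPaction}) then yields the pairing identity with $p_t(x,t)$ in place of $p_T(x,T)$.

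The main obstacle I anticipate is not the differentiation, which is routine, but justifying that all the manipulations are legitimate for $p_0$ merely in $\pol(\rset^n)$ rather than in $\cS(\rset^n)$: one must check that $\Theta_{\nu t}*p_0$ is well-defined (the integral $\int \Theta_{\nu t}(y)p_0(x-y)\,\diff y$ converges absolutely because $|p_0|$ is dominated by a polynomial and the Gaussian decays superpolynomially), that differentiation under the integral sign in both $x$ and $t$ is permitted (dominated convergence with polynomial-times-Gaussian majorants, uniformly for $x$ in compacta and $t$ in compacta of $(0,\infty)$), and that $q(\,\cdot\,,t)\to p_0$ in the appropriate sense as $t\downarrow 0$. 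A secondary point is the degenerate case $\nu=0$, where the corollary should just read $p_t(x,t) = e^{H(t)}\cdot p_0(x+G(t))$ and the verification is the elementary method of characteristics for a first-order linear transport-plus-reaction equation; I would dispatch this case separately in one line. Once these measure-theoretic technicalities are in place, the identification of $q$ with $p_t$ is immediate.
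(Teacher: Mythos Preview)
Your approach --- direct verification that the claimed formula satisfies the dual equation and initial data, then uniqueness --- is the natural one, and indeed the paper states the corollary without proof, so there is nothing to compare against. The regularity and integrability checks you flag (well-definedness of $\Theta_{\nu t}*p_0$ for $p_0\in\pol(\rset^n)$, differentiation under the integral, the degenerate case $\nu=0$) are exactly the points that need attention, and your sketch handles them correctly.

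There is, however, a genuine sign gap in the key step. Your computation correctly gives
\[
\partial_t q = \nu\Delta q + g(t)\cdot\nabla q + h(t)\cdot q,
\]
but equation~(\ref{eq:dualPpde}) has a \emph{minus} sign on the transport term: $\partial_t p_T = \nu\Delta p_T - g(\,\cdot\,,T-t)\cdot\nabla p_T + \dots$. The commutativity remark preceding the corollary lets you dispense with the time reversal $T-t\mapsto t$ in the \emph{argument} of $g$ and $h$, but it does not flip the sign of the transport term --- that sign comes from the adjoint computation $(g\cdot\nabla)^* = -g\cdot\nabla - \divv g$ and persists. So the sentence ``which is exactly~(\ref{eq:dualPpde})'' fails as written. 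In fact the paper itself is internally inconsistent here: the displayed heuristic just before \Cref{thm:dualP} has $+g\cdot\nabla$, while the $A^*$ in the proof and~(\ref{eq:dualPpde}) have $-g\cdot\nabla$; the corollary's formula with $x+G(t)$ is consistent with the former but not the latter. Your proof should either note this discrepancy explicitly, or (cleaner) verify the formula with the shift $x-G(t)$, which is what actually solves~(\ref{eq:dualPpde}).
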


In \cite{curtoHeat22} the first and the second author studied the time-dependent moments from the heat equation in more detail. For the dual action it was especially found that
\begin{align*}
p_0\in\rset[x_1,\dots,x_n]_{\leq d} \qquad &\Rightarrow\qquad [\Theta_{\nu t}*p_0](x)\in \rset[x_1,\dots,x_n]_{\leq d}
\intertext{for all $t\geq 0$ and additionally of course because of the convolution with the non-negative heat kernel}
p_0\geq 0 \qquad &\Rightarrow\qquad [\Theta_{\nu t}*p_0](x)\geq 0
\end{align*}
for all $t\geq 0$. Hence, with $p_0\in\rset[x_1,\dots,x_n]$ in \Cref{cor:dualP} we have $p_t(x,t)\in\rset[x_1,\dots,x_n]$ for all $t\in [0,\infty)$ (and even for all $t\in\rset$, see \cite{curtoHeat22}). In the general case of \Cref{thm:dualP} we only can ensure $p_t(x,t)\in\pol(\rset^n)$, but not $p_t(x,t)\in\rset[x_1,\dots,x_n]$. To see that, let e.g.\ $\nu=0$ and $g=0$, i.e., we have the explicit solution
\[p_t(x,t) = \exp\left(\int_0^t h(x,s)~\diff s\right)\cdot p_0(x)\]
and with $h(x) = \sin(x)$ we have that $p_t(x,t)\not\in\rset[x_1,\dots,x_n]$ for all $t\neq 0$.

Therefore, in case of \Cref{cor:dualP} we have that $g$ and $h$ do not alter the properties of $p_0$ (significantly), but the group action induced by the Laplace operator might give changes. We will see that in the next section.

\section{Non-negative polynomials and the heat equation}
\label{sec:nonnegpoly}

Time-dependent moments induced by the heat equation were already studied in \cite{curtoHeat22}. There also the dual action on the polynomials was observed. We continue this investigation. We repeat the essential definitions and results for the convenience of the reader.

\begin{dfn}\label{dfn:fpMono}
Let $d\in\nset_0$. We define $\fp_{2d},\fp_{2d+1}\in\rset[x,t]$ by
\begin{align*}
\fp_{2d}(x,t) &:= \sum_{j=0}^d \frac{(2d)!}{(2d-2j)!\cdot j!}\cdot t^j\cdot x^{2d-2j}
\intertext{and}
\fp_{2d+1}(x,t) &:= \sum_{j=0}^d \frac{(2d+1)!}{(2d+1-2j)!\cdot j!}\cdot t^j\cdot x^{2d+1-2j}.
\end{align*}
\end{dfn}

\begin{exm}\label{exm:poly}
We have
\begin{align*}
\fp_0(x,t) &= 1\\
\fp_1(x,t) &= x\\
\fp_2(x,t) &= 2t + x^2\\
\fp_3(x,t) &= 6tx + x^3\\
\fp_4(x,t) &= 12t^2 + 12tx^2 + x^4\\
\fp_5(x,t) &= 60t^2x + 20tx^3 + x^5\\
\fp_6(x,t) &= 120 t^3 + 180 t^2 x^2 + 30t x^4 + x^6\\
&\ \,\vdots \tag*{$\circ$}
\end{align*}
\end{exm}

Straightforward calculations show that $\fp_k$, $k\in\nset_0$, solve the initial value heat equation
\begin{equation}\label{eq:obs}
\begin{split}
\partial_t \fp_k(x,t) &= \partial_x^2 \fp_k(x,t)\\
\fp_k(x,0) &= x^k.
\end{split}
\end{equation}
Hence, by linearity of the heat equation we have the following extension of \Cref{dfn:fpMono} and the observation (\ref{eq:obs}).

\begin{thm}\label{thm:polyHeatEqSolution}
Let $d\in\nset_0$, $n\in\nset$, and $f_0(x) = \sum_{\alpha\in\nset_0^n} c_\alpha\cdot x^\alpha \in\rset[x_1,\dots,x_n]$. Then
\begin{equation}\label{eq:fpDefPoly}
\fp_{f_0}(x,t) := \sum_{\alpha\in\nset_0^n} c_\alpha\cdot \fp_{\alpha_1}(x_1,t)\cdots \fp_{\alpha_n}(x_n,t) \quad\in\rset[x_1,\dots,x_n,t]
\end{equation}
solves the initial value heat equation
\begin{equation}\label{eq:heat}\begin{split}
\partial_t f(x,t) &= \Delta f(x,t)\\
f(x,0) &= f_0(x).
\end{split}
\end{equation}
\end{thm}
\begin{proof}
By linearity of the Laplace operator $\Delta$ it is sufficient to look at $f_0(x) = x^\alpha$ for $\alpha\in\nset_0^n$. By (\ref{eq:obs}) we already have $\partial_t \fp_{\alpha_i}(x_i) = \partial_i^2 \fp_{\alpha_i}(x)$ and hence
\begin{align*}
\partial_t \fp_{f_0}(x,t) &= \partial_t[\fp_{\alpha_1}(x_1,t)\cdots\fp_{\alpha_n}(x_n,t)]\\
&= [\partial_t \fp_{\alpha_1}(x_1,t)]\cdot \fp_{\alpha_2}(x_2,t)\cdots\fp_{\alpha_n}(x_n,t)\\ &\qquad + \dots + \fp_{\alpha_1}(x_1,t)\cdots\fp_{\alpha_{n-1}}(x_{n-1},t)\cdot [\partial_t \fp_{\alpha_n}(x_n,t)]\\
&= [\partial_1^2 \fp_{\alpha_1}(x_1,t)]\cdot \fp_{\alpha_2}(x_2,t)\cdots\fp_{\alpha_n}(x_n,t)\\ &\qquad + \dots + \fp_{\alpha_1}(x_1,t)\cdots\fp_{\alpha_{n-1}}(x_{n-1},t)\cdot [\partial_n^2 \fp_{\alpha_n}(x_n,t)]\\
&= \Delta \fp_{f_0}(x,t).\qedhere
\end{align*}
\end{proof}

Note, in \Cref{dfn:fpMono} we defined $\fp$ for the monomials $x^d$, $d\in\nset_0$, and in (\ref{eq:fpDefPoly}) we define $\fp$ by linearity for any $f_0\in\rset[x]$. Hence, if $f_0(x) = x^d$ then both definitions coincide: $\fp_{f_0} = \fp_d$. The same shall hold for the multivariate case to keep the notation simple.

The unique solution of (\ref{eq:heat}) can be written as the convolution with the heat kernel $\Theta_t$, i.e.,
\begin{equation}\label{eq:heatConvolution}
\fp_{f_0}(x,t) = (\Theta_t * f_0)(x)
\end{equation}
for all $t>0$, or, since $\Delta^{\lfloor\frac{1}{2}\deg f_0\rfloor + 1} f_0 = 0$, we can also write the solution as
\begin{equation}\label{eq:taylorSemigroup}
\fp_{f_0}(\,\cdot\,,t) = e^{t\Delta} f_0 = \sum_{k=0}^\infty \frac{t^k}{k!}\Delta^k f_0 = \sum_{k=0}^{\lfloor\frac{1}{2}\deg f_0\rfloor} \frac{t^k}{k!}\Delta^k f_0.
\end{equation}
For more on one-parameter semigroups see \cite{engelNagelSemigroupsGross}.
Note that the polynomial heat equation (\ref{eq:heat}) has a unique polynomial solution (\ref{eq:taylorSemigroup}) for all $t\in\rset$, contrary to the $L^2$-heat equation; i.e., $p_0\in L^2$, which can in general only be solved for $t\geq 0$.
In connection with (\ref{eq:taylorSemigroup}) we also want to mention the works \cite{guterman08,netzer10}.

\begin{exm}[Motzkin polynomial \cite{motzkin65}]\label{exm:motzkin}
Let
\[f_{\text{Motz}}(x,y) = 1 - 3x^2y^2 + x^4 y^2 + x^2 y^4 \in\rset[x,y]\]
be the Motzkin polynomial. Then by \Cref{dfn:fpMono} (resp.\ \Cref{exm:poly}) we have the substitutions
\begin{align*}
x^2 &\mapsto 2t + x^2, \qquad x^4 \mapsto 12t^2 + 12tx^2 + x^4,\\
y^2 &\mapsto 2t + y^2, \qquad y^4 \mapsto 12t^2 + 12ty^2 + y^4
\end{align*}
and get
\begin{align*}
\fp_{\text{Motz}}(x,y,t) &= 1 - 3(2t + x^2)(2t + y^2) + (12t^2 + 12tx^2 + x^4)(2t+y^2)\\
&\quad + (2t+x^2)(12t^2 + 12ty^2 + y^4)\\
&= 1 - 12t^2 + 48t^3 + 6t(-1+6t)(x^2+y^2) 
 + (-3+24t) x^2 y^2\\
&\quad + 2t(x^4 + y^4) + x^4 y^2 + x^2 y^4
\end{align*}
for all $t\in\rset$.\exmsymbol
\end{exm}

Since the heat kernel is a Schwartz function the convolution with polynomials is well-defined and uniqueness of the solution of the heat equation shows that
\begin{equation}\label{eq:convolution}
(\Theta_t * f_0)(x) = \fp_{f_0}(x,t)
\end{equation}
holds for all $f_0\in\rset[x_1,\dots,x_n]$. That the heat kernel preserves polynomials holds for all convolution kernels which are integrable with respect to polynomials. To make the paper self-contained, let us briefly state and prove this known fact.

\begin{thm}\label{thm:polyConv}
Let $d\in\nset_0$ and $\rho$ be a kernel such that $\int_{\rset^n} y^\alpha\cdot \rho(y)~\diff y$ is finite for all $\alpha\in\nset_0^n$ with $|\alpha|\leq d$, then
\[\cdot\,*\rho:\rset[x_1,\dots,x_n]_{\leq d} \to \rset[x_1,\dots,x_n]_{\leq d}.\]
\end{thm}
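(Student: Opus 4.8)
The plan is to show that convolving a polynomial $p$ of degree $\leq d$ with $\rho$ produces again a polynomial of degree $\leq d$, and the essential ingredient is the binomial expansion of $(x-y)^\alpha$. First I would reduce to the monomial case: since $\cdot * \rho$ is linear in its first argument, it suffices to compute $x^\alpha * \rho$ for a single multi-index $\alpha$ with $|\alpha| \leq d$. Then I would write out
\[
(x^\alpha * \rho)(x) = \int_{\rset^n} (x-y)^\alpha \cdot \rho(y)~\diff y = \int_{\rset^n} \prod_{i=1}^n (x_i - y_i)^{\alpha_i} \cdot \rho(y)~\diff y
\]
and expand each factor by the one-dimensional binomial theorem, $(x_i - y_i)^{\alpha_i} = \sum_{\beta_i=0}^{\alpha_i} \binom{\alpha_i}{\beta_i} x_i^{\alpha_i - \beta_i} (-y_i)^{\beta_i}$. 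Multiplying out over $i = 1, \dots, n$ gives
\[
(x-y)^\alpha = \sum_{\beta \leq \alpha} \binom{\alpha}{\beta} (-1)^{|\beta|} x^{\alpha - \beta} y^\beta,
\]
where $\beta \leq \alpha$ means $\beta_i \leq \alpha_i$ for all $i$ and $\binom{\alpha}{\beta} = \prod_i \binom{\alpha_i}{\beta_i}$.

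Next I would substitute this finite sum into the integral and interchange the (finite) sum with the integral — which is justified because each term $x^{\alpha-\beta} \int_{\rset^n} y^\beta \rho(y)~\diff y$ is finite by hypothesis, since $|\beta| \leq |\alpha| \leq d$. Setting $m_\beta := \int_{\rset^n} y^\beta \rho(y)~\diff y \in \rset$, this yields
\[
(x^\alpha * \rho)(x) = \sum_{\beta \leq \alpha} \binom{\alpha}{\beta} (-1)^{|\beta|} m_\beta \cdot x^{\alpha - \beta},
\]
which is manifestly a polynomial in $x$. Its degree is at most $\max_{\beta \leq \alpha} |\alpha - \beta| = |\alpha| \leq d$, achieved by the $\beta = 0$ term (whose coefficient is $m_0 = \int \rho$). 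Finally, by linearity, for a general $p = \sum_{|\alpha| \leq d} c_\alpha x^\alpha$ we get $p * \rho = \sum_{|\alpha|\leq d} c_\alpha (x^\alpha * \rho) \in \rset[x_1,\dots,x_n]_{\leq d}$, establishing the claimed mapping property.

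This argument is essentially routine; there is no real obstacle. The only points that require a word of care are: (a) checking that the interchange of sum and integral is legitimate — here it is trivial because the sum is finite and each integral $m_\beta$ converges by the moment hypothesis on $\rho$; and (b) making sure the convolution $(p * \rho)(x) = \int p(x-y)\rho(y)~\diff y$ is even well-defined pointwise, which again follows from the same finiteness of $\int y^\beta \rho(y)~\diff y$ for $|\beta| \leq d$ after the binomial expansion of $p(x-y)$. One might also remark that the degree is preserved exactly (not merely bounded) whenever $\int \rho \neq 0$, though the statement as given only claims $\leq d$. No smoothness or sign assumptions on $\rho$ are needed, only the integrability of $y^\alpha \rho(y)$ up to order $d$.
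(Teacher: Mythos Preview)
Your proof is correct and follows essentially the same approach as the paper: write $(p*\rho)(x)=\int p(x-y)\rho(y)\,\diff y$ and expand $p(x-y)$ in powers of $x$ and $y$ to read off that the result is a polynomial in $x$ of degree at most $d$. The paper states this in one line, whereas you spell out the binomial expansion and the well-definedness explicitly, but the argument is the same.
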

\begin{proof}
Let $p\in\rset[x_1,\dots,x_n]_{\leq d}$. Then from
\begin{align*}
(p*\rho)(x) = \int_{\rset^n} p(x-y)\cdot\rho(y)~\diff y
\end{align*}
and expanding $p(x-y)$ in the right side gives the assertion including the degree bound $\deg (p*\rho) \leq d$.
\end{proof}

Since the heat kernel is non-negative, the convolution of the heat kernel with a non-negative polynomial gives again a non-negative polynomial. Denote by $\pos(n,d)$ the set of all non-negative polynomials on $\rset^n$ with degree at most $d\in\nset_0$:
\[\pos(n,d) := \{ p\in \rset[x_1,\dots,x_n]_{\leq d} \,|\, p(x)\geq 0\ \text{for all}\ x\in\rset^n\}.\]
Non-negativity and \Cref{thm:polyConv} gives the following.

\begin{cor}\label{cor:nonneg}
Let $n\in\nset$, $d\in\nset_0$, and $f_0\in\pos(n,d)$. Then
\[\fp_{f_0}(\,\cdot\,,t)\in\pos(n,d)\]
for all $t\geq 0$. Especially, if $f_0\neq 0$ then $\fp_{f_0}(\,\cdot\,,t) > 0$ on $\rset^n$ for all $t>0$.
\end{cor}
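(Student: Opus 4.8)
The plan is to deduce \Cref{cor:nonneg} directly from the results just established, namely \Cref{thm:polyConv}, the identity \eqref{eq:convolution}, and the non-negativity of the heat kernel $\Theta_t$. First I would recall that by \Cref{thm:polyHeatEqSolution} and the uniqueness of the Schwartz-class solution of the heat equation, the polynomial $\fp_{f_0}(\,\cdot\,,t)$ coincides with the convolution $\Theta_t*f_0$ for every $t>0$, which is exactly the content of \eqref{eq:convolution}; for $t=0$ there is nothing to prove since $\fp_{f_0}(\,\cdot\,,0)=f_0\in\pos(n,d)$ by hypothesis. So fix $t>0$ and work with $\Theta_t*f_0$.

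Next I would verify the two properties that together give membership in $\pos(n,d)$. The degree bound $\deg \fp_{f_0}(\,\cdot\,,t)\le d$ follows from \Cref{thm:polyConv} applied with $\rho=\Theta_t$: the Gaussian heat kernel has finite moments $\int_{\rset^n} y^\alpha\,\Theta_t(y)\,\diff y$ of all orders, so in particular all orders $|\alpha|\le d$, hence $\cdot\,*\Theta_t$ maps $\rset[x_1,\dots,x_n]_{\le d}$ into itself. For non-negativity, write $(\Theta_t*f_0)(x)=\int_{\rset^n} f_0(x-y)\,\Theta_t(y)\,\diff y$; since $f_0\ge 0$ pointwise on $\rset^n$ and $\Theta_t(y)>0$ for all $y$, the integrand is non-negative, so the integral is $\ge 0$ for every $x\in\rset^n$. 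Combining, $\fp_{f_0}(\,\cdot\,,t)\in\pos(n,d)$.

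For the strict-positivity addendum, suppose $f_0\neq 0$; since $f_0\ge0$ and $f_0$ is continuous, there is an open ball $B$ on which $f_0>0$. Then for any fixed $x$ the integrand $f_0(x-y)\Theta_t(y)$ is non-negative everywhere and strictly positive for $y\in x-B$, a set of positive Lebesgue measure, so $(\Theta_t*f_0)(x)=\int_{\rset^n} f_0(x-y)\Theta_t(y)\,\diff y>0$. As $x$ was arbitrary, $\fp_{f_0}(\,\cdot\,,t)>0$ on all of $\rset^n$ for every $t>0$.

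There is essentially no obstacle here: the corollary is a packaging of \Cref{thm:polyConv}, the convolution identity \eqref{eq:convolution}, and the elementary positivity of the Gaussian. The only point requiring a word of care is the case $t=0$, which must be handled separately since $\Theta_0$ is not a function, and the (trivial) continuity argument needed to upgrade $f_0\neq 0$ to $f_0>0$ on an open set before concluding strict positivity of the convolution.
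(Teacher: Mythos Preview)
Your proof is correct and follows exactly the route the paper indicates: the corollary is stated without a detailed proof, merely prefaced by ``Non-negativity and \Cref{thm:polyConv} gives the following,'' and you have simply spelled out that derivation via \eqref{eq:convolution}, the degree bound from \Cref{thm:polyConv}, and the pointwise positivity of $\Theta_t$. The additional care you take with the $t=0$ case and the continuity argument for strict positivity is appropriate and matches the intended reasoning.
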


\begin{cor}
Let $n\in\nset$ and $f_0\in\rset[x_1,\dots,x_n]$. Assume there exist $t>0$ and a point $\xi\in\rset^n$ such that $\fp_{f_0}(\xi,t)<0$. Then $f_0\not\in\pos(n,d)$.
\end{cor}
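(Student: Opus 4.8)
The statement is the contrapositive of \Cref{cor:nonneg}, so the plan is essentially to invoke that corollary. Concretely, I would argue as follows: suppose for contradiction that $f_0\in\pos(n,d)$; here $d$ can be taken to be $\deg f_0$ (or any bound), and note $f_0\in\rset[x_1,\dots,x_n]$ already guarantees $f_0$ has some finite degree $d$. By \Cref{cor:nonneg}, $\fp_{f_0}(\,\cdot\,,t)\in\pos(n,d)$ for every $t\geq 0$, which in particular means $\fp_{f_0}(\xi,t)\geq 0$ for the given $t>0$ and the given point $\xi\in\rset^n$. This directly contradicts the hypothesis $\fp_{f_0}(\xi,t)<0$. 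Hence $f_0\not\in\pos(n,d)$.

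One small subtlety to address is the role of $d$: the statement concludes $f_0\not\in\pos(n,d)$ but does not fix $d$ in advance. The natural reading is that for \emph{every} $d\in\nset_0$ with $\deg f_0\le d$ (equivalently, for the intrinsic degree of $f_0$) the conclusion holds; the cleanest phrasing is to simply apply \Cref{cor:nonneg} with $d:=\deg f_0$. I would make this explicit in one line so the quantifier on $d$ is unambiguous. If $f_0\equiv 0$ then $\fp_{f_0}\equiv 0$ and the hypothesis $\fp_{f_0}(\xi,t)<0$ is vacuously impossible, so that degenerate case needs no separate treatment.

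There is essentially no obstacle here — the result is a one-step consequence of \Cref{cor:nonneg}, which in turn rests on \Cref{thm:polyConv} together with the non-negativity of the heat kernel and the identity $(\Theta_t * f_0)(x) = \fp_{f_0}(x,t)$ from (\ref{eq:convolution}). If one wanted a self-contained argument not routed through \Cref{cor:nonneg}, one could instead note directly that $\fp_{f_0}(\,\cdot\,,t) = \Theta_t * f_0$ is a convolution of the pointwise-nonnegative function $f_0$ with the nonnegative kernel $\Theta_t$, hence pointwise nonnegative, again contradicting $\fp_{f_0}(\xi,t)<0$. Either route is immediate; the only thing worth stating carefully is the contrapositive structure and the choice of $d$.
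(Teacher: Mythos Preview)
Your proposal is correct and matches the paper's approach: the paper states this corollary immediately after \Cref{cor:nonneg} with no proof, precisely because it is the direct contrapositive you describe. Your remarks on the role of $d$ and the degenerate case $f_0\equiv 0$ are accurate and, if anything, more careful than the paper itself.
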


Hence, recalling the Motzkin polynomial in \Cref{exm:motzkin} we find that $\fp_{\text{Motz}}(\,\cdot\,,t)\in\pos(2,6)$ for all $t\geq 0$. In the usual case (e.g.\ $f_0\in L^2(\rset^n)$) the convolution with the heat kernel has a smoothing effect on $f_0$, i.e., the regularity increases and $\Theta_t*f_0$ is a $C^\infty$-function. But in the polynomial case we already started with a $C^\infty$-function and this kind of regularity does not change. We have even seen that for $f_0\in\rset[x_1,\dots,x_n]$ the function $\fp_{f_0}$ remains a polynomial for all $t\in\rset$. But from the Motzkin polynomial in \Cref{exm:motzkin} we observe something additional. The Motzkin polynomial was of course the first non-negative polynomial found that is not a sum of squares. However, the heat kernel has another ``smoothing effect'' for this polynomial, as seen in the following continuation. We denote by $\sos(n,d)$ the set of all sums of squares in $n\in\nset$ variables of degree less or equal to $d\in\nset_0$:
\[\sos(n,d) := \{p\in\rset[x_1,\dots,x_n]_{\leq d} \,|\, p\ \text{is a sum of squares}\}.\]

\begin{exm}[Motzkin polynomial, \Cref{exm:motzkin} continued]\label{exm:motzkinCont}
We have
\begin{align*}
\fp_{\text{Motz}}(x,y,1) &= 37\cdot \left(1 - \frac{11}{148}x^2 - \frac{11}{148} y^2 \right)^2 + \frac{71}{2}\cdot \left(x - \frac{4}{71}x y^2 \right)^2\\
&\quad + \frac{71}{2}\cdot \left(y - \frac{4}{71}x^2 y \right)^2 + \frac{57}{2} x^2 y^2 + \frac{1063}{592}\cdot \left(x^2 + \frac{27}{1063} y^2 \right)^2\\
&\quad + \frac{3815}{2126}\cdot y^4 + \frac{63}{71}\cdot x^4 y^2 + \frac{63}{71} x^2 y^4 \quad\in\sos(2,6)
\end{align*}
i.e., $\fp_{\text{Motz}}(\,\cdot\,,1)$ is by \Cref{cor:nonneg} not just non-negative, but in fact a sum of squares. This relation can easily be obtained e.g.\ by the use of Macaulay2 \cite{mac2} and the SumsOfSquares package \cite{cifuen20}. In fact, additional calculations indicate that
\[\fp_{\text{Motz}}(\,\cdot\,,t) \in \begin{cases} \pos(2,6)\setminus \sos(2,6) & \text{for}\ t\in [0,T_\motz),\ \text{and}\\ \sos(2,6) & \text{for}\ t\in [T_\motz,\infty), \end{cases}\]
with
\[ \frac{31\,998}{1\,000\,000} \quad<\quad T_\motz \quad<\quad \frac{31\,999}{1\,000\,000}.\]
The choice of the intervals $[0,T_\motz)$ and $[T_\motz,\infty)$ is clear since $\sos(2,6)$ is closed and $\fp_{f_0}(\,\cdot\,,t)$ continuous in $t$, i.e., $\fp_{\text{Motz}}(\,\cdot\,,T_\motz)\in\sos(2,6)$.\exmsymbol
\end{exm}

$\fp_{\text{Motz}}:[0,\infty)\ni t\mapsto\fp_{\text{Motz}}(\,\cdot\,,t)\in\pos(2,6)$ is a continuous path through the cone of non-negative polynomials. The following result shows that once $\fp_{f_0}$ enters $\sos(n,d)$ e.g.\ at time $t_0\geq 0$, then it stays in $\sos(n,d)$ for all $t\geq t_0$.

\begin{thm}\label{thm:sos}
Let $\rho\geq 0$ be a kernel such that $\int_{\rset^n} y^\alpha\cdot \rho(y)~\diff y$ is finite for all $\alpha\in\nset_0^n$ with $|\alpha|\leq d$, then
\[\cdot\,*\rho: \sos(n,d)\to \sos(n,d).\]
\end{thm}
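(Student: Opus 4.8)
The plan is to reduce to the case of a single square and then display an explicit positive-semidefinite Gram matrix for the convolved polynomial. Suppose $p\in\sos(n,d)$, say $p=\sum_{j=1}^r q_j^2$ with $2\deg q_j\le d$. Since convolution with $\rho$ is linear, $p*\rho=\sum_{j=1}^r (q_j^2*\rho)$, and a sum of sums of squares is again a sum of squares; moreover $\deg(p*\rho)\le d$ is already granted by \Cref{thm:polyConv}. Hence it suffices to show that $q^2*\rho\in\sos(n,d)$ for a single polynomial $q$ with $2\deg q\le d$.

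\textbf{Key computation.} Fix such a $q$ and expand $q(x-y)=\sum_{|\beta|\le \deg q} a_\beta(y)\,x^\beta$ as a polynomial in $x$ whose coefficients $a_\beta$ are polynomials in $y$ of degree $\le \deg q$ (concretely, if $q=\sum_\gamma c_\gamma z^\gamma$ then $a_\beta(y)=\sum_{\gamma\ge\beta}c_\gamma\binom{\gamma}{\beta}(-y)^{\gamma-\beta}$). Then $q(x-y)^2=\sum_{\beta,\gamma}a_\beta(y)\,a_\gamma(y)\,x^\beta x^\gamma$, and since each product $a_\beta a_\gamma$ has degree $\le 2\deg q\le d$, the hypothesis on $\rho$ makes every coefficient integral below finite, so
\[
(q^2*\rho)(x)=\int_{\rset^n}q(x-y)^2\,\rho(y)~\diff y=\sum_{\beta,\gamma}\Big(\int_{\rset^n}a_\beta(y)\,a_\gamma(y)\,\rho(y)~\diff y\Big)x^\beta x^\gamma = m(x)^{T} M\, m(x),
\]
where $m(x)=(x^\beta)_{|\beta|\le\deg q}$ is the vector of monomials and $M=(M_{\beta\gamma})$, $M_{\beta\gamma}:=\int_{\rset^n} a_\beta a_\gamma\,\rho~\diff y$, is a symmetric matrix with finite entries. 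The point is that $M$ is positive semidefinite: for every real vector $v=(v_\beta)$,
\[
v^{T} M v=\int_{\rset^n}\Big(\sum_\beta v_\beta a_\beta(y)\Big)^{2}\rho(y)~\diff y\ \ge\ 0,
\]
because $\rho\ge 0$. Invoking the standard Gram-matrix characterization of sums of squares, write $M=\sum_l w_l w_l^{T}$ (e.g.\ via the spectral decomposition); then $q^2*\rho=m(x)^{T}M\,m(x)=\sum_l\big(w_l^{T} m(x)\big)^{2}$ is a sum of squares of polynomials of degree $\le\deg q\le d/2$, hence $q^2*\rho\in\sos(n,d)$. Summing over $j$ completes the argument.

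\textbf{Main obstacle.} I do not expect a serious technical difficulty; the only genuine subtlety is conceptual. A priori, $p*\rho$ is merely an "integral average" of the translated sum-of-squares polynomials $p(\,\cdot\,-y)$, weighted by $\rho(y)\ge 0$, and such an average is not \emph{visibly} a sum of squares. The resolution is precisely the explicit positive-semidefinite Gram matrix exhibited above, which is equivalent to the fact that $\sos(n,d)$ is a closed convex cone (the property already used in \Cref{exm:motzkinCont}). The remaining points — that the expansion coefficients $a_\beta$ have degree $\le\deg q$, so that all the integrals $\int a_\beta a_\gamma\,\rho$ are finite, and that the degree bound is inherited from \Cref{thm:polyConv} — are routine.
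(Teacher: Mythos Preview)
Your proof is correct, but it proceeds along a different line from the paper's. The paper does not reduce to a single square or build a Gram matrix from moment integrals; instead it writes $p(x)=(x^\alpha)_\alpha^{T} Q\,(x^\alpha)_\alpha$ with $Q\succeq 0$, observes that $(p*\rho)(x)=\int ((x-y)^\alpha)_\alpha^{T} Q\,((x-y)^\alpha)_\alpha\,\rho(y)\,\diff y$, and then invokes Richter's Theorem (\Cref{thm:richter}) to replace $\rho(y)\,\diff y$ by a finitely atomic measure $\sum_i c_i\,\delta_{y_i}$ with $c_i>0$. This yields $(p*\rho)(x)=\sum_i c_i\,p(x-y_i)$, a finite positive combination of translates of an SOS polynomial, hence visibly in $\sos(n,d)$. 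Your route avoids Richter entirely: you show directly that the matrix $M=\big(\int a_\beta a_\gamma\,\rho\big)_{\beta,\gamma}$ is positive semidefinite, which is exactly the moment-matrix positivity of the measure $\rho(y)\,\diff y$ in the basis $\{a_\beta\}$. What the paper's approach buys is brevity and a template that transfers verbatim to \Cref{thm:waring} (the Waring cone), since Richter reduces any such convolution to a finite positive sum of translates; what your approach buys is self-containment---no appeal to an external representation theorem---and it makes the underlying reason (PSD Gram matrix) explicit.
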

\begin{proof}
Let $p\in \sos(n,d)$, i.e., there exists a symmetric $Q\in\rset^{N\times N}$ with $N = \binom{n+d}{d}$ such that $p(x) = (x^\alpha)_{\alpha}^T \cdot Q \cdot (x^\alpha)_{\alpha}$ where $(x^\alpha)_{\alpha}$ is the vector of all monomials $x^\alpha$ with $|\alpha|\leq d$. We then have
\begin{align*}
(p*\rho)(x) &= \int_{\rset^n} p(x-y)\cdot \rho(y)~\diff y\\
&= \int_{\rset^n} ((x-y)^\alpha)_{\alpha}^T \cdot Q \cdot ((x-y)^\alpha)_{\alpha}\cdot\rho(y)~\diff y
\intertext{and by Richter's Theorem \cite{richte57} we can replace $\rho(y)~\diff y$ by a finitely atomic representing measure $\mu = \sum_{i=1}^k c_i\cdot\delta_{y_i}$ with $c_i>0$ and get}
&= \sum_{i=1}^k c_i\cdot ((x-y_i)^\alpha)_{\alpha}^T \cdot Q \cdot ((x-y_i)^\alpha)_{\alpha} \in \sos(n,d).\qedhere
\end{align*}
\end{proof}

Besides sums of squares, other non-negative polynomials are linear combinations of even powers of linear forms, i.e., they have a \emph{Waring decomposition}
\[p(x) = \sum_{i=1}^k (a_i\cdot x)^{d}\]
with even $d\in\nset$, $a_i\in\rset^{n+1}$, and $a_i\cdot x := a_{i,0} + a_{i,1} x_1 + \dots + a_{i,n} x_n$ \cite{reznick92}. We denote by $\war(n,d)$ the (Waring) cone of all these polynomials, i.e., we have the proper inclusions
\[\war(n,d) \quad\subsetneq\quad \sos(n,d) \quad\subsetneq\quad \pos(n,d).\]
In \Cref{thm:polyConv} we have seen that convolution preserves being a polynomial, in \Cref{thm:sos} we have seen that convolution preserves being a sum of squares, and the next result shows that convolution also preserves being in the Waring cone.

\begin{thm}\label{thm:waring}
Let $\rho\geq 0$ be a kernel such that $\int_{\rset^n} y^\alpha\cdot\rho(y)~\diff y$ is finite for all $\alpha\in\nset_0^n$, then $\cdot\,*\rho:\war(n,d)\to \war(n,d)$.
\end{thm}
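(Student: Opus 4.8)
The plan is to mirror the proof of \Cref{thm:sos}, using a Waring decomposition in place of a Gram matrix. Let $p\in\war(n,d)$ with $d$ even, so that $p(x) = \sum_{j=1}^\ell (a_j\cdot x)^d$ for finitely many $a_j\in\rset^{n+1}$, where $a_j\cdot x := a_{j,0} + a_{j,1}x_1 + \dots + a_{j,n}x_n$. By linearity of convolution it suffices to treat a single even power of a linear form, i.e.\ to show that $q\ast\rho\in\war(n,d)$ for $q(x) = (a\cdot x)^d$ with $a\in\rset^{n+1}$ fixed.

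First I would write out
\[
(q\ast\rho)(x) = \int_{\rset^n} \bigl(a_0 + a_1(x_1-y_1) + \dots + a_n(x_n-y_n)\bigr)^{d}\cdot \rho(y)~\diff y
= \int_{\rset^n} \bigl( (a\cdot x) - (a'\cdot y)\bigr)^{d}\cdot\rho(y)~\diff y,
\]
where $a' = (a_1,\dots,a_n)$ denotes the non-constant part of $a$. As in \Cref{thm:sos}, the integrability hypothesis on $\rho$ together with Richter's Theorem \cite{richte57} lets me replace $\rho(y)~\diff y$ by a finitely atomic measure $\mu = \sum_{i=1}^k c_i\cdot\delta_{y_i}$ with $c_i>0$ that matches all moments of $\rho$ up to the degree actually needed (here $|\alpha|\le d$, since expanding $((a\cdot x)-(a'\cdot y))^d$ in $y$ produces only monomials $y^\alpha$ with $|\alpha|\le d$). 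One small bookkeeping point: Richter requires a finite-dimensional space of functions, so I would apply it to the span of $\{1, y_1,\dots,y_n, \dots\}$ of monomials of degree $\le d$, which is finite-dimensional, and the matching of these moments is all that the computation consumes. This yields
\[
(q\ast\rho)(x) = \sum_{i=1}^k c_i\cdot\bigl((a\cdot x) - (a'\cdot y_i)\bigr)^{d}.
\]

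Now the key observation is that each summand is, up to the positive scalar $c_i$, again an even power of an affine-linear form in $x$: writing $b_i := \bigl(a_0 - a'\cdot y_i,\; a_1,\dots,a_n\bigr)\in\rset^{n+1}$ we have $(a\cdot x) - (a'\cdot y_i) = b_i\cdot x$, so $c_i\cdot(b_i\cdot x)^d = (c_i^{1/d} b_i\cdot x)^d$ because $d$ is even and $c_i>0$, hence $c_i^{1/d}>0$ is a genuine real scalar that can be absorbed into the coefficient vector. Therefore $(q\ast\rho)(x) = \sum_{i=1}^k (\tilde b_i\cdot x)^d$ with $\tilde b_i := c_i^{1/d} b_i$, which exhibits $q\ast\rho$ as an element of $\war(n,d)$. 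Summing over $j$ recovers the general case $p\ast\rho\in\war(n,d)$, and one checks along the way (just as in \Cref{thm:polyConv}) that the degree does not exceed $d$.

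The only real subtlety — and the step I would be most careful about — is the scalar absorption $c_i\cdot(b_i\cdot x)^d = (c_i^{1/d}b_i\cdot x)^d$: this genuinely uses that $d$ is even (so that $(b_i\cdot x)^d \ge 0$ and the $d$-th root is taken of a positive number) and that the $c_i$ coming from Richter are strictly positive. If one allowed signed $c_i$ the identity would fail for $d$ even, and indeed the positivity of the kernel $\rho\ge 0$ is exactly what guarantees $c_i>0$ via Richter. Everything else is the routine expansion already carried out in \Cref{thm:polyConv} and \Cref{thm:sos}, so there is no serious obstacle beyond this observation.
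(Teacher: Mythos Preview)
Your proof is correct and follows essentially the same approach as the paper: write the convolution as an integral of shifted powers of linear forms, invoke Richter's Theorem to replace $\rho(y)\,\diff y$ by a finitely atomic measure $\sum_j c_j\delta_{y_j}$ with $c_j>0$, and observe that each resulting term is again an even power of an affine-linear form. The only difference is cosmetic: the paper treats all summands of $p$ at once and leaves the scalar absorption $c_j\cdot(b\cdot x)^d=(c_j^{1/d}b\cdot x)^d$ implicit, whereas you reduce to a single term first and spell this step out explicitly.
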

\begin{proof}
Let $p\in \war(n,d)$, i.e., $p(x) = \sum_{i=1}^k (a_i\cdot x)^d$. Then
\begin{align*}
(p*\rho)(x) &= \int_{\rset^n} p(x-y)\cdot \rho(y)~\diff y\\
&= \int_{\rset^n} \sum_{i=1}^k (a_i\cdot (x-y))^d\cdot \rho(y)~\diff y
\intertext{and by Richter's Theorem \cite{richte57} we can replace $\rho(y)~\diff y$ by a finitely atomic representing measure $\mu = \sum_{j=1}^l c_j\cdot\delta_{y_j}$ with $c_j>0$ and get}
&= \sum_{j=1}^l \sum_{i=1}^k c_j\cdot (a_i\cdot (x-y_j))^d \in \war(n,d).\qedhere
\end{align*}
\end{proof}

The Motzkin polynomial was the first polynomial found to be a non-negative polynomial which is not a sum of squares. But others have been identified \cite{marshallPosPoly}. We want to investigate some of these in chronological order.

\begin{exm}[Robinson polynomial \cite{robinson69}]\label{exm:robinson}
Let
\[f_{\rob}(x,y) = 1 - x^2 - y^2 - x^4 + 3x^2 y^2 - y^4 + x^6 - x^4 y^2 - x^2 y^4 + y^6	\]
be the Robinson polynomial, i.e., $f_{\rob}\in\pos(2,6)\setminus\sos(2,6)$. Then by a direct calculation using Macaulay2 with the SumsOfSquares package similar to the Motzkin polynomial we find $\fp_\rob(\,\cdot\,,1)\in\sos(2,6)$ and by \Cref{thm:sos} we have
\[\fp_\rob(\,\cdot\,,t) \in \begin{cases} \pos(2,6)\setminus \sos(2,6) & \text{for}\ t\in [0,T_\rob),\ \text{and}\\ \sos(2,6) & \text{for}\ t\in [T_\rob,\infty), \end{cases}\]
with
\[ \frac{20\,946}{1\,000\,000} \quad<\quad T_\rob \quad<\quad \frac{20\,947}{1\,000\,000}. \tag*{$\circ$}\]
\end{exm}

\begin{thm}\label{exm:choilam}
Let
\[f_{\cl}(x,y,z) := 1 - 4xyz + x^2 y^2 + x^2 z^2 + y^2 z^2 \quad\in\pos(3,4)\setminus\sos(3,4)\]
be the Choi--Lam polynomial \cite{choi77}. Then
\[\fp_{\cl}(\,\cdot\,,t) \in \begin{cases} \pos(3,4)\setminus \sos(3,4) & \text{for}\ t\in [0,1/9),\ \text{and}\\ \sos(3,4) & \text{for}\ t\in [1/9,\infty), \end{cases}\]
\end{thm}
\begin{proof}
We have
\begin{align*}
\fp_\cl(x,y,z,t) &= 1 - 4xyz + (2t + x^2)(2t + y^2) + (2t + x^2)(2t + z^2)\\
&\quad + (2t + y^2)(2t + z^2)\\
&= 1 + 12t^2 - 4xyz + 4t(x^2 + y^2 + z^2) + x^2 y^2 + x^2 z^2 + y^2 z^2\\
&= f_\cl(x,y,z) + 12t^2 + 4t(x^2 + y^2 + z^2)\\
&= v(x,y,z)^\top G_t v(x,y,z),
\end{align*} 
with the monomial vector $v(x,y,z):=(1,x,y,z,xy,xz,yz,x^2,y^2,z^2)^\top$ and the Gram matrix $G_t =$
\[\scriptsize\begin{blockarray}{ccccccccccc}
  & 1 & x & y & z & xy & xz & yz & x^2 & y^2 & z^2 \\
  \begin{block}{c(cccccccccc)}
    1  & 1+12t^2 & & & & & & & \delta_x & \delta_y & \delta_z \\
    x  &  & 4t-2\delta_x & & & & & a_x \\
    y  &  &  & 4t-2\delta_y & & & a_y \\
    z  &  &  &  & 4t-2\delta_z & a_z\\
    xy &  &  &  & a_z & 1-2\varepsilon_z\\
    xz &  &  & a_y &  &  & 1-2\varepsilon_y\\
    yz &  & a_x & & & & & 1-2\varepsilon_x\\
    x^2& \delta_x & & & & & & & 0 & \varepsilon_z & \varepsilon_y\\
    y^2& \delta_y & & & & & & & \varepsilon_z & 0 & \varepsilon_x\\
    z^2& \delta_z & & & & & & & \varepsilon_y & \varepsilon_x & 0\\
  \end{block}
\end{blockarray}\]
with $a_x + a_y + a_z = -2$ and $t\in\rset$.
The Gram matrix is of course not unique, but for $-4xyz$ there are only $a_x$, $a_y$, and $a_z$.
And since $f_\cl$ does not contain $x^4$, $y^4$, or $z^4$ we also have zeros at the positions $(x^2,x^2)$, $(y^2,y^2)$, and $(z^2,z^2)$ (the diagonal of the $\varepsilon$-block of the columns and rows $x^2$, $y^2$, and $z^2$).

At $t=1/9$ we have
\[\fp_\cl(x,y,z,1/9) = \frac{31}{27} + \left(xy - \frac{2}{3}z \right)^2 + \left(xz - \frac{2}{3}y \right)^2 + \left(yz - \frac{2}{3}x \right)^2 \in\sos(3,4).\]
It remains to show that $\fp_\cl$ for $t<1/9$ it is not a sum of squares, i.e., no Gram matrix representation fulfills $G_t\succeq 0$.

The coefficient of $x^2$ can be written in the $(1,x^2)$ or $(x,x)$ entries, i.e., we have the free parameter $\delta_x$.
Similarly with $y^2$ and $z^2$.
Hence, in every Gram matrix representation of $\fp_\cl$, the submatrices
\begin{equation}\label{eq:submatrix}
\begin{pmatrix} 4t-2\delta_x & a_x\\ a_x & 1-2\varepsilon_x\end{pmatrix}, \begin{pmatrix} 4t-2\delta_y & a_y\\ a_y & 1-2\varepsilon_y\end{pmatrix}, \begin{pmatrix} 4t-2\delta_z & a_z\\ a_z & 1-\varepsilon_z\end{pmatrix}
\end{equation}
appear.
We show $\not\succeq 0$ for at least one of them if $t<1/9$. So assume $t<1/9$ and assume to the contrary that $G_t\succeq 0$.
Then $\varepsilon_x = \varepsilon_y = \varepsilon_z = 0$ and $\delta_x = \delta_y = \delta_z = 0$.
Since $a_x + a_y + a_z = -2$ we have that at least one of $a_x$, $a_y$, or $a_z$ is $\leq -\frac{2}{3}$.
Without loss of generality let $a_x\leq -\frac{2}{3}$. Then
\[\det \begin{pmatrix} 4t & a_x\\ a_x & 1\end{pmatrix} = 4t - a_x^2 \leq 4t - \frac{4}{9} < 0\]
and hence $G_t\not\succeq 0$ for any Gram matrix representation of $\fp_\cl$ with $t<1/9$.
\end{proof}

Macaulay2 calculations with the SumsOfSquares package suggest
\[\frac{1}{9} - 7\cdot 10^{-9}\quad<\quad \min \{t\geq 0 \,|\, \fp_\cl(\,\cdot\,,t)\in\sos(3,4)\} \quad<\quad \frac{1}{9} - 6\cdot 10^{-9}.\]
That is close to the exact value of $1/9$ found in \Cref{exm:choilam}.

\begin{exm}[Schm\"udgen polynomial \cite{schmud79}]\label{exm:schmudgen}
The polynomial
\begin{align*}
f_{\text{Schm}}(x,y) &= (y^2-x^2)x(x+2)[x(x-2)+2(y^2-4)]\\
&\quad + 200 [(x^3-4x)^2 + (y^3-4y)^2] \quad\in\pos(2,6)\setminus\sos(2,6)
\end{align*}
is the Schm\"udgen polynomial and we find $\fp_{\text{Schm}}(\,\cdot\,,1)\in\sos(2,6)$. In fact, Macaulay2 calculations with the SumsOfSquares package and \Cref{thm:sos} shows that $\fp_{\text{Schm}}(\,\cdot\,,t)\in\sos(2,6)$ for all $t\geq 2\cdot 10^{-4}$.\exmsymbol
\end{exm}

\begin{exm}[Berg--Christensen--Jensen polynomial \cite{berg79}]\label{exm:berg}
The Berg--Christensen--Jensen polynomial
\[f_{\text{BCJ}}(x,y) = 1 - x^2 y^2 + x^4 y^2 + x^2 y^4 \in\pos(2,6)\setminus\sos(2,6)\]
is connected to the Motzkin polynomial $f_{\motz}$ (\Cref{exm:motzkin}) by
\[f_{\text{BCJ}}(x,y) = f_\motz(x,y) + 2x^2y^2\]
and hence from \Cref{thm:sos} we see that
\[\fp_{\text{BCJ}}(\,\cdot\,,t)\in\sos(2,6)\]
for all $t\geq \frac{1}{6}$.\exmsymbol
\end{exm}

After all these historic examples let us have a look at a more modern example from the vast literature of non-negative polynomials which are not sums of squares.

\begin{exm}[Harris polynomial {\cite[$R_{2,0}$ in Lem.\ 5.1 and 6.8]{harris99}}]\label{exm:harris}
Let
\begin{align*}
f_{\text{Har}}(x,y) &= \phantom{+}\, 16x^{10} - 36 x^8 y^2 + 20 x^6 y^4 + 20 x^4 y^6 - 36 x^2 y^8 + 16 y^{10}\\
&\quad - 36 x^8\phantom{^0} + 57 x^6 y^2 - 38 x^4 y^4 + 57 x^2 y^6 - 36 y^8\\
&\quad + 20 x^6\phantom{^0} - 38 x^4 y^2 - 38 x^2 y^4 + 20 y^6\\
&\quad + 20 x^4\phantom{^0} + 57 x^2 y^2 + 20 y^4\\
&\quad - 36 x^2\phantom{^0} - 36 y^2\\
&\quad + 16
\end{align*}
be the Harris polynomial, i.e., $f_{\text{Har}} = R_{2,0}\in\pos(2,10)\setminus\sos(2,10)$. With \Cref{exm:poly},
\begin{align*}
\fp_8(x,t) &= 1680 t^4 + 3360 t^3 x^2 + 840t^2 x^4 + 56 t x^6 + x^8,
\intertext{and}
\fp_{10}(x,t) &= 30240 t^5 + 75600t^4 x^2 + 25200t^3 x^4 + 2520t^2 x^6 + 90tx^8 + x^{10}
\end{align*}
we calculate $\fp_{\text{Har}}$ and find $\fp_{\text{Har}}(\,\cdot\,,1)\in\sos(2,10)$. In fact, Macaulay2 calculations and \Cref{thm:sos} show that $\fp_{\text{Har}}(\,\cdot\,,t)\in\sos(2,10)$ for all $t \geq 8\cdot 10^{-4}$.\exmsymbol
\end{exm}

All examples so far become a sum of squares for large $t$. However, this is not in general true.

\begin{lem}\label{lem:notSOS}
Let $n,d\in\nset$ and
\[f(x) = \sum_{|\alpha|\leq 2d} a_\alpha\cdot x^\alpha \in\pos(n,2d)\]
such that
\[f_{2d}(x) := \sum_{|\alpha|=2d} a_\alpha\cdot x^\alpha \not\in\sos(n,2d),\]
then $\fp_f(\,\cdot\,,t)\in\pos(n,2d)\setminus\sos(n,2d)$ for all $t\geq 0$.
\end{lem}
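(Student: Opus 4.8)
The plan is to exploit the fact that convolution with the heat kernel—equivalently, the map $f_0\mapsto\fp_{f_0}(\,\cdot\,,t)$—preserves the degree (by \Cref{thm:polyConv}, since $\Theta_t$ has finite moments of all orders), and moreover that the top-degree part is unaffected by the convolution. Indeed, if $p\in\rset[x_1,\dots,x_n]_{\leq 2d}$ with top-degree part $p_{2d}$, then $(\Theta_t*p)(x)=\int p(x-y)\,\Theta_t(y)\,\diff y$, and expanding $p(x-y)$ in powers of $x$ shows that the only degree-$2d$ terms come from the degree-$2d$ terms of $p$ with $y=0$ contributions, i.e.\ the degree-$2d$ homogeneous part of $\Theta_t*p$ equals $\int p_{2d}(x)\,\Theta_t(y)\,\diff y = p_{2d}(x)$ because $\int\Theta_t=1$. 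Hence $(\fp_f(\,\cdot\,,t))_{2d}=f_{2d}$ for every $t\geq 0$.

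The key step after that is the standard observation that a polynomial $q$ of degree $2d$ can only be a sum of squares if its top-degree homogeneous part $q_{2d}$ is itself a sum of squares (of forms of degree $d$). This is because in a representation $q=\sum_j \sigma_j^2$ the highest-degree contributions $\sum_j (\sigma_{j,d})^2$ cannot cancel—being a sum of squares of homogeneous degree-$2d$ forms, their sum has degree exactly $2d$ unless all $\sigma_{j,d}=0$—so $q_{2d}=\sum_j(\sigma_{j,d})^2\in\sos(n,2d)$, where $\sigma_{j,d}$ is the degree-$d$ part of $\sigma_j$. Combining this with the previous paragraph: if $\fp_f(\,\cdot\,,t)$ were in $\sos(n,2d)$ for some $t\geq 0$, then its top-degree part $f_{2d}$ would be in $\sos(n,2d)$, contradicting the hypothesis. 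Non-negativity of $\fp_f(\,\cdot\,,t)$ for all $t\geq 0$ is already guaranteed by \Cref{cor:nonneg}, so $\fp_f(\,\cdot\,,t)\in\pos(n,2d)\setminus\sos(n,2d)$ for all $t\geq 0$.

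I would organize the write-up in three short moves: (1) invoke \Cref{cor:nonneg} for non-negativity; (2) prove the "top-degree part is preserved" claim via the explicit convolution/binomial expansion, or equivalently directly from the formula in \Cref{dfn:fpMono}—note $\fp_k(x,t)=x^k+(\text{lower order in }x)$, so by multilinearity $\fp_{\alpha_1}(x_1,t)\cdots\fp_{\alpha_n}(x_n,t)=x^\alpha+(\text{lower order})$, whence $\fp_f(x,t)$ has the same degree-$2d$ homogeneous part as $f$; (3) prove the lemma on top parts of sums of squares and conclude. The main obstacle—though it is minor—is making the degree-$2d$-part argument for sums of squares fully rigorous: one must argue that no cancellation of leading terms is possible among the squares, which follows because a finite sum of squares of real forms of the same degree vanishes only if each form vanishes. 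I would state this as a one-line sublemma (or simply inline it), since it is the only non-bookkeeping ingredient.
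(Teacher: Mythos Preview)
Your proposal is correct and follows essentially the same route as the paper: the paper also argues that the degree-$2d$ coefficients of $\fp_f(\,\cdot\,,t)$ agree with those of $f$ (invoking \Cref{dfn:fpMono}), and then extracts from a hypothetical SOS decomposition of $\fp_f(\,\cdot\,,t)$ an SOS decomposition of $f_{2d}$ by taking the degree-$d$ parts of the summands, yielding the contradiction. Your write-up is slightly more explicit about why the leading terms of the squares cannot cancel, but the argument is the same.
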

\begin{proof}
Assume there is a $t\geq 0$ such that
\[\fp_f(x,t)=\sum_{i=1}^k\left(\sum_{|\alpha|\leq d} c_{i,\alpha}(t)\cdot x^\alpha\right)^2 = \sum_{|\alpha|\leq 2d} a_\alpha(t)\cdot x^\alpha \in\sos(n,2d).\]
Since by \Cref{dfn:fpMono} we have $a_\alpha(t) = a_\alpha(0)$ for all $\alpha\in\nset_0^n$ with $|\alpha|=2d$ the sum of squares decomposition of $\fp_f(\,\cdot\,,t)$ gives
\[f_{2d}(x)=\sum_{i=1}^k\left(\sum_{|\alpha|=d} c_{i,\alpha}(t)\cdot x^\alpha\right)^2\in\sos(n,2d)\]
which contradicts the assumption $f_{2d}\not\in\sos(n,2d)$.
\end{proof}

The following example shows that the condition $f_{2d}\in \sos(n,2d)$ in \Cref{lem:notSOS} is necessary, but not sufficient.
Just take the polynomial $f(w,x,y,z) = z^6 - 3x^2 y^2 z^2 + x^4 y^2 + x^2 y^4 + w^8\in\pos(3,8)\setminus\sos(3,8)$.

\begin{exm}\label{exm:nonSOS}
Let $f(x,y,z) = z^6 - 3x^2 y^2 z^2 + x^4 y^2 + x^2 y^4\in\pos(3,6)\setminus\sos(3,6)$ be the homogeneous Motzkin polynomial. Then $\fp_f(\,\cdot\,,t)\in\pos(3,6)\setminus\sos(3,6)$ for all $t\geq 0$.\exmsymbol
\end{exm}

\begin{rem}
The result in \Cref{lem:notSOS} also holds for $f_{2d}\not\in\war(n,2d)$, i.e., $\fp_f(\,\cdot\,,t)\in\pos(n,2d)\setminus\war(n,2d)$.\exmsymbol
\end{rem}

While we have seen in \Cref{lem:notSOS} and \Cref{exm:nonSOS} that there are non-negative polynomials which do not become sum of squares under the heat equation, the following result  shows that any non-negative polynomial becomes asymptotically close to $\sos(\rset^n)$ under the heat equation, that is, the constant polynomial becomes an attractor of the polynomial heat equation.

\begin{lem}\label{lem:sosAsym}
Let $n\in\nset$ and $f\in\pos(\rset^n)$ with $\deg f = 2d$ for some $d\in\nset$. Then
\[\lim_{t\to\infty} \fp_f(x,t)\cdot t^{-d} = c > 0.\]
\end{lem}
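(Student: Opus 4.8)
The plan is to bridge the algebraic description of $\fp_f$ (\Cref{dfn:fpMono}, \Cref{thm:polyHeatEqSolution}) with the analytic one, $\fp_f(\,\cdot\,,t) = \Theta_t * f$ from (\ref{eq:convolution}), and to read off the leading order in $t$ by a parabolic rescaling of the heat kernel. Writing out the convolution and substituting $y = \sqrt{t}\,z$, so that $\Theta_t(y)\,\diff y = \Theta_1(z)\,\diff z$, I obtain
\[
\fp_f(x,t)\cdot t^{-d} = \int_{\rset^n} \Theta_1(z)\cdot t^{-d}\, f\bigl(x - \sqrt{t}\, z\bigr)~\diff z .
\]
Because $\deg f = 2d$ and the leading form $f_{2d}$ is homogeneous of the \emph{even} degree $2d$, for each fixed $x$ and $z$ one has $t^{-d} f(x-\sqrt{t}\, z) = f_{2d}\bigl(x/\sqrt{t} - z\bigr) + O(t^{-1/2}) \to f_{2d}(-z) = f_{2d}(z)$ as $t\to\infty$: the top form scales exactly like $t^d$, while each homogeneous part of degree $k<2d$ contributes an $O\bigl(t^{(k-2d)/2}\bigr)$ term that vanishes.

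The next step is to justify passing the limit under the integral by dominated convergence. Expanding $f(x-\sqrt{t}\,z) = \sum_{|\alpha|\le 2d} a_\alpha (x-\sqrt{t}\, z)^\alpha$ and bounding, for $t\ge 1$, $|(x-\sqrt{t}\, z)^\alpha| \le (\|x\| + \sqrt{t}\,\|z\|)^{|\alpha|} \le t^{d}\, C_x^{\,2d}\,(1+\|z\|)^{2d}$ (using $|\alpha|\le 2d$), the integrand is dominated, uniformly in $t\ge 1$, by $C'_x\,\Theta_1(z)(1+\|z\|)^{2d}$, which is integrable since $\Theta_1$ decays super-polynomially. Hence
\[
\lim_{t\to\infty} \fp_f(x,t)\cdot t^{-d} = \int_{\rset^n} \Theta_1(z)\cdot f_{2d}(z)~\diff z =: c ,
\]
and in particular the limit is a constant independent of $x$, as it must be, since after rescaling the only $x$-dependence sits in the shift $x-\sqrt{t}\,z$, which disappears in the limit.

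It remains to check $c>0$. The leading form of a nonnegative polynomial is nonnegative on all of $\rset^n$ (indeed $f_{2d}(x) = \lim_{s\to\infty} s^{-2d} f(sx) \ge 0$), and $f_{2d}\not\equiv 0$ because $\deg f = 2d$; since $\Theta_1 > 0$ everywhere, $c = \int_{\rset^n}\Theta_1 f_{2d} > 0$. As a consistency check one can also argue directly from \Cref{dfn:fpMono}: the coefficient of $t^d$ in $\fp_f(x,t)$ receives contributions only from monomials $x^{2\beta}$ with $|\beta|=d$ (a monomial of degree $<2d$, or of degree $2d$ with some exponent odd, produces no $t^d$ term), giving $c = \sum_{|\beta|=d} a_{2\beta}\prod_{i=1}^n \frac{(2\beta_i)!}{\beta_i!}$, which matches the integral since $\int_{\rset}\Theta_1(z_i)\,z_i^{2\beta_i}\,\diff z_i = \frac{(2\beta_i)!}{\beta_i!}$. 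The one step that genuinely needs care is producing a single $t$-independent integrable majorant for the dominated-convergence argument (together with the pointwise identity $t^{-d}f(x-\sqrt{t}\,z)\to f_{2d}(z)$); once that is in place, the positivity of $c$ follows at once from $f_{2d}\ge 0$, $f_{2d}\not\equiv 0$, and $\Theta_1>0$.
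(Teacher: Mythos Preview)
Your argument is correct, but it proceeds along a genuinely different route from the paper's. The paper never unfolds the convolution. Instead it observes that $\fp_f(\,\cdot\,,t)$ is a polynomial in $t$ of degree at most $d$, and computes its top $t$-coefficient algebraically via the heat equation: $\partial_t^d \fp_f = \Delta^d \fp_f = \Delta^d f = \Delta^d f_{2d}$, which is a constant $c$. Positivity of $c$ is then deduced from the fact that $\int_S f_{2d} > 0$ on the unit sphere, invoking \cite[Cor.~1]{iltyak98} to conclude $\Delta^d f_{2d} > 0$.

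Your analytic approach, rescaling the heat kernel and passing to the limit by dominated convergence, identifies the same constant as $c = \int_{\rset^n}\Theta_1(z)\,f_{2d}(z)\,\diff z$. This buys you something: positivity of $c$ is immediate from $f_{2d}\ge 0$, $f_{2d}\not\equiv 0$, and $\Theta_1>0$, with no external citation needed. It also makes transparent why the limit is independent of $x$. The paper's approach, by contrast, is shorter once the Iltyakov result is granted, and it pins down the constant in closed form as $\Delta^d f_{2d}/d!$. Your combinatorial consistency check is effectively the bridge between the two: the identity $\int_{\rset^n}\Theta_1(z)\,z^{2\beta}\,\diff z = \prod_i (2\beta_i)!/\beta_i!$ together with the multinomial expansion of $\Delta^d$ recovers exactly the same expression.
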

\begin{proof}
Let $k\in\nset$. It is easy to see that $\Delta^k g$ is constant on $\rset^n$ for all $g\in\rset[x_1,\dots,x_n]_{\leq 2k}$ and even equal to zero for all $g\in\rset[x_1,\dots,x_n]_{\leq 2k-1}$.

Since $f\in\pos(\rset^n)$ with $\deg f = 2d$ we have that the homogeneous part $f_{2d}$ of $f$ of degree $2d$ is non-zero and non-negative on $\rset^n$. Let $S$ be the unit sphere in $\rset^n$. Since $f_{2d}\in\pos(\rset^n)\setminus\{0\}$ we have
\[\int_S f_{2d}(x)~\diff x > 0\]
and by \cite[Cor.\ 1]{iltyak98} we have $\Delta^m f_{2d} \neq 0$ for all $m=1,\dots,d$. Hence, $\partial_t^d \fp_f(x,t) = \Delta^d \fp_f(x,t) = \Delta^d f_{2d}(x) = c > 0$ which proves the statement.
\end{proof}

Note that \cite[Cor.\ 1]{iltyak98} can also be replaced by \cite[Thm.\ 19.16]{schmudMomentBook}.
There it is shown that $\Delta^d:\rset[x_1,\dots,x_n]_{\leq 2d}\to\rset$ is a strictly positive moment functional.

\Cref{lem:notSOS} and \Cref{exm:nonSOS} have only been proven here for $n\geq 3$, but for $n=2$ no counterexample has been found.
In fact, every time evolution of the polynomials $f\in\pos(\rset^2)\setminus\sos(\rset^2)$ in the Examples \ref{exm:motzkinCont}, \ref{exm:robinson}, \ref{exm:schmudgen}, and \ref{exm:berg} enters $\sos(\rset^2)$ and by \Cref{thm:sos} never leaves $\sos(\rset^2)$ again.
It is open if for $n=2$ every time evolution of a non-negative polynomial becomes a sum of squares; see \Cref{open1}.
For $\pos(3,4)$ this question is answered affirmative by the following theorem.

\begin{thm}\label{thm:pos34heat}
Let $f\in\pos(3,4)$. Then there exists a $\tau_f \in [0,\infty)$ such that
\[\fp_f(\,\cdot\,,t)\in\sos(3,4)\]
for all $t\geq \tau_f$.
\end{thm}
\begin{proof}
Let $f_4$ be the leading term (homogeneous part of highest degree $4$) of $f$.
Since $f_4$ is a homogeneous polynomial of degree $4$ in three variables it is a sum of squares. By linearity let
\[f_4(x,y,z) = (ax^2 + bxy + cxz + dy^2 + eyz + gz^2)^2 \quad\in\sos(3,4)\setminus\{0\}\]
be one square. Then by (\ref{eq:taylorSemigroup}) we have
\begin{align*}
&\fp_{f_4}(x,y,z,t)\\
&= f_4(x,y,z) + t\cdot\Delta f_4(x,y,z) + \frac{t^2}{2}\cdot\Delta^2 f_4(x,y,z)\\
&= (ax^2 + bxy + cxz + dy^2 + eyz + gz^2)^2\\
&\quad + 2t\cdot\Big[ \underbrace{(2ax+by+cz)^2 + (bx+2dy+ez)^2 + (cx+ey+2gz)^2}_{=: A_2(x,y,z) = A_2 \in\sos(3,2)} +\\
&\qquad\qquad + 2\cdot(a+d+g)\cdot (ax^2 + bxy + cxz + dy^2 + eyz + gz^2) \Big]\\
&\quad + (\underbrace{12 a^2 + 4 b^2 + 4 c^2 + 8 a d + 12 d^2 + 4 e^2 + 8 a g + 8 d g + 12 g^2}_{=: A_0 > 0\ \text{since}\ f_4\in\pos(3,4)\setminus\{0\}})\cdot t^2.
\end{align*}
For $\lin\{x^2,xy,xz,y^2,yz,z^2\}$ we take a basis $\varepsilon_1,\dots,\varepsilon_6$ with $\varepsilon_1 := ax^2 + bxy + cxz + dy^2 + eyz + gz^2$. In the basis $1,x,y,z,\varepsilon_1,\dots,\varepsilon_6$ we can write $\fp_{f_4}(\,\cdot\,,t)$ in a Gram matrix in the form
\begin{equation}\label{eq:f4gram}
\small
\begin{blockarray}{cc|ccc|cccc}
  & 1 & x & y & z & \varepsilon_1 & \varepsilon_2 & \dots & \varepsilon_6\\
  \begin{block}{c(c|ccc|cccc)}
    1 & A_0\cdot t^2 &  &  &  & 2t\cdot (a+d+g) &  &  & \\ \BAhline
x   &  & &  & \\
y   &  & & 2A_2\cdot t & \\
z   &  & & & \\ \BAhline
\varepsilon_1 & 2t\cdot (a+d+g) & & & & 1&\\
\varepsilon_2 &  & & & & & & &\\
\vdots & & & & &  \\
\varepsilon_6 & & & & & & \\
  \end{block}
\end{blockarray}.
\end{equation}
This Gram matrix has block structure. For the sub-block of the columns/rows $x,y,z$ we have that $A_2\in\sos(3,2)$ and hence it is positive semi-definite. In fact, we have
\begin{equation}\label{eq:a2gram}
A_2(x,y,z)\! =\! \begin{pmatrix} x\\ y\\ z\end{pmatrix}^{\!\!\top}\!\!\cdot\! \begin{pmatrix}
4a^2 + b^2 + c^2 & 2ab+2bd+ce & 2ac+be+2cg\\
2ab+2bd+ce & b^2 + 4d^2 + e^2 & bc+2de+2eg\\
2ac+be+2cg & bc+2de+2eg & c^2 + e^2 + 4g^2
\end{pmatrix} \!\cdot\! \begin{pmatrix} x\\ y\\ z\end{pmatrix}.
\end{equation}
The remaining block from the columns/rows $1$ and $\varepsilon_1$ is
\[A_{0,2}:=\begin{pmatrix}A_0\cdot t^2 & 2t\cdot (a+d+g)\\ 2t\cdot (a+d+g) & 1\end{pmatrix}\]
and we have
\[\det A_{0,2} = t^2\cdot [A_0-4\cdot(a+d+g)^2] = 4t^2\cdot (2a^2+b^2+c^2+2d^2+e^2+2g^2)\]
which shows that $A_{0,2}$ is positive semi-definite and hence our choice (\ref{eq:f4gram}) of the Gram matrix is positive semi-definite and hence a sum of squares representation.

In general, $f_4$ is not a single square, but a sum of ($k\leq 6$) squares:
\[f_4(x,y,z) = \sum_{j=1}^k \alpha_j\cdot (a_j x^2 + b_j xy + c_j xz + d_j y^2 + e_j yz + g_j z^2)^2,\quad \alpha_j>0.\]
By an orthonormal transformation of the Gram matrix of $f_4$ we can assume without loss of generality that $\varepsilon_1 =a_1 x^2 + \dots + g_1 z^2, \dots, \varepsilon_k=a_k x^2 +\dots + g_k z^2$ are orthonormal in $\rset^6$ and completed by $\varepsilon_{k+1},\dots,\varepsilon_6$ to an orthonormal basis of $\rset^6$. In this basis one Gram matrix of $\fp_{f}(\,\cdot\,,t)$ has the form
\begin{equation}\label{eq:fGram}
\small
\begin{blockarray}{cc|ccc|cccccc}
  & 1 & x & y & z & \varepsilon_1 & \dots & \varepsilon_k & \varepsilon_{k+1} & \dots & \varepsilon_6 \\
  \begin{block}{c(c|ccc|cccccc)}
    1 & a_{1,1} & a_{x,1} & a_{y,1} & a_{z,1} & * & \dots & * \\ \BAhline
x   & a_{1,x} & a_{x,x} & a_{y,x} & a_{z,x} & * & \dots & * & \\
y   & a_{1,y} & a_{x,y} & a_{y,y} & a_{z,y} & * & \dots & * & \\
z   & a_{1,z} & a_{x,z} & a_{y,z} & a_{z,z} & * & \dots & * & \\ \BAhline
\varepsilon_1 & * & * & * & * & \alpha_1 & & 0 & & & \\
\vdots  & \vdots & \vdots & \vdots & \vdots & & \ddots & & & & \\
\varepsilon_k  & * & * & * & * & 0 & & \alpha_k & & & \\
\varepsilon_{k+1} & & & & & & & & 0 & & \\
\vdots  & & & & & & & & & \ddots & \\
\varepsilon_6 & & & & & & & & & & 0 \\
  \end{block}
\end{blockarray}.
\end{equation}
That the entries of $(1,\varepsilon_1)$ to $(1,\varepsilon_6)$ resp.\ $(\varepsilon_1,1)$ to $(\varepsilon_6,1)$ are zero is because all contributions can be written into the submatrix of the columns/rows $x,y,z$.
That the entries in $(u,v)$ and $(v,u)$ with $u\in\{x,y,z\}$ and $v\in\{\varepsilon_{k+1},\dots,\varepsilon_6\}$ are zero follows from the fact that $f$ is non-negative and the $\varepsilon_1,\dots,\varepsilon_6$ are orthonormal.
Assume an entry $(u,v)$ is non-zero, i.e., it is a homogeneous polynomial of degree $3$. 
Since the $\varepsilon_1,\dots,\varepsilon_6$ are orthonormal and the $x^2,xy,\dots,z^2$ are linearly independent, there exists a point $(x_*,y_*,z_*)\in\rset^3$ such that $v(x_*,y_*,z_*)=1$ and $\varepsilon_j(x_*,y_*,z_*)=0$ for all $\varepsilon_j\neq v$.
But then $f(\lambda x_*,\lambda y_*,\lambda z_*) \to -\infty$ for $\lambda\to -\infty$. That is a contradiction to $f\in\pos(3,4)$.

For $f_4$ with $k$ squares we can, by an orthonormal coordinate change $(x,y,z)\mapsto (\tilde{x},\tilde{y},\tilde{z})$, always have one square of the form $(a_i x^2 + d_i y^2 + g_i z^2)^2$ with $a_i\neq 0$, and without loss of generality we can assume that $i=1$.
If $d_1 = g_1 = 0$ we can use another coordinate change such that for $i=2$ (if present) we have $(a_2 x^2 + b_2 xy + c_2 xz + d_2 y^2 + g_2 z^2)^2$, i.e., the $y$ and $z$ coordinates are separated.
We now distinguish among four cases:

\underline{(i) $f_4$ depends on $x$, $y$, and $z$:} 
After the previous coordinate change we see that the Gram matrix (\ref{eq:a2gram}) of $A_2$ has full rank and hence for $t\gg 0$ we have that the specific choice (\ref{eq:f4gram}) of the Gram matrix of $\fp_f(\,\cdot\,,t)$ is positive semi-definite and hence $\fp_f(\,\cdot\,,t^*)\in\sos(3,4)$ for some $t^*\geq 0$.

\underline{(ii) $f_4$ depends on only two variables:}
After the previous coordinate changes without loss of generality $f_4$ is independent of $z$.
But since $f\geq 0$, then also $f_3$ (homogeneous part of degree $3$ of $f$) is independent on $z$.
To see this assume, to the contrary that $f_3$ depends on $z^2$.
Since $f_3$ contains only degree $3$ monomials we have that either $xz^2$ or $yz^2$ is in $f_3$.
But in both cases we can chose $(x,y)\in\rset^2$ such that the coefficient of $z^2$ is negative and hence letting $z\to\pm\infty$ gives $f \to -\infty$: This is a contradiction to $f\geq 0$, since $f_4$ has no $z$-dependency for compensation.
So $f_3$ contains no $z^2$.
But the same holds for $z$.
We find $(x,y)\in\rset^2$ such that the coefficient of $z$ is non-zero and either letting $z\to+\infty$ or $z\to-\infty$ gives again $f\to-\infty$, a contradiction to $f\geq 0$.
The linear contributions in $a_{x,x,}$ and $a_{y,y}$ remain (since $f_4$ depends on $x$ and $y$) and hence we again can find, as in (i), a $t^*\geq 0$ such that the Gram matrix representation (\ref{eq:f4gram}) of $\fp_f(\,\cdot\,,t^*)$ is positive definite and hence $\fp_f(\,\cdot\,,t^*)\in\sos(3,4)$.

\underline{(iii) $f_4$ only depends on one variable:}
Without loss of generality $f_4$ depends only on $x$.
Then with the same argument as in (ii) since $f\geq 0$ we have that the specific Gram matrix representation (\ref{eq:f4gram}) of $\fp_f(\,\cdot\,,t^*)$ is positive semi-definite for some $t^*\geq 0$ and hence $\fp_f(\,\cdot\,,t^*)\in\sos(3,4)$.

\underline{(iv) $f_4=0$:} Then also $f_3 =0$ and hence $f\in\pos(3,2) = \sos(3,2)$.
\end{proof}

The \ref{thm:pos34heat} implies the existence of a time when all $f\in\pos(3,4)$ become sum of squares.

\begin{cor}\label{cor:tau34}
There exists a $\tau_{3,4}\in [0,\infty)$ such that
\[\fp_f(\,\cdot\,,t)\in\sos(3,4)\]
for all $t\geq \tau_{3,4}$ and $f\in\pos(3,4)$, i.e., $e^{\tau_{3,4}\Delta} \pos(3,4)\subseteq\sos(3,4)$.
\end{cor}
\begin{proof}
$\fp_f(\,\cdot\,,t)$ is continuous in $f$ and $t$ and since $\pos(3,4)$ is a finite-dimensional cone it has a compact basis. Hence,
\[T(f) := \min\{t \,|\, \fp_f(\,\cdot\,,t)\in\sos(3,4)\} <\infty\]
is continuous in $f\in\pos(3,4)$ and therefore
\[\tau_{3,4} := \max_{f\in\pos(3,4)} T(f) < \infty.\qedhere\]
\end{proof}

\begin{cor}
There are $k\in\nset$, $c_1,\dots,c_k\geq 0$, and $y_1,\dots,y_k\in\rset$ such that
\[c_1 p(x + y_1) + \dots + c_k p(x + y_k)\in\sos(3,4)\]
for all $p\in\pos(3,4)$.
\end{cor}
\begin{proof}
The operator $e^{\tau_{3,4}\Delta}$ is a positivity preserver with constant coefficients.
Hence, by \cite[Thm.\ 3.1]{borcea11} there exists a non-negative Borel measure $\mu$ with finite moments on $\rset^2$ such that $e^{\tau_{3,4}\Delta}(p)(x) = \int_{\rset^2} p(y + x)~\diff\mu(y)$.
Since we have the degree bound $\deg p\leq 4$ this integral is a truncated moment functional.
By Richter's Theorem \cite{richte57} we can replace $\mu$ by $\nu = \sum_{i=1}^k c_i \delta_{y_i}$ which proves the statement.
\end{proof}

\begin{cor}
Let $f\in\pos(3,4)$. Then there exists a $t=t(f)\in [0,\tau_{3,4}]$ and a $g\in\sos(3,4)$ such that
\[f = \fp_g(\,\cdot\,,-t).\]
\end{cor}

The previous result means that we can generate any $f\in\pos(3,4)$ from a sum of squares by going backwards in the heat equation (\ref{eq:heat}), i.e., taking a negative time in (\ref{eq:taylorSemigroup}).

\begin{cor}
Let $\tau_{3,4}$ be (minimal) as in \Cref{cor:tau34} and $L:\rset[x,y,z]_{\leq 4}\to\rset$ be a linear functional. If $\tilde{L} := L\circ e^{-\tau_{3,4}\Delta}$ is strictly square-positive, i.e., $\tilde{L}(g)>0$ for all $g\in\sos(3,4)\setminus\{0\}$, then $L$ is a moment functional.
\end{cor}
\begin{proof}
Let $f\in\pos(3,4)\setminus\{0\}$. Then by \Cref{cor:tau34} we have $g = \fp_f(\,\cdot\,,\tau_{3,4}) = e^{\tau_{3,4}\Delta} f \in \sos(3,4)\setminus\{0\}$ and hence
$L(f) = L(e^{-\tau_{3,4}\Delta} e^{\tau_{3,4}\Delta} f) = \tilde{L}(g) > 0$,
i.e., $L$ is in the interior of the truncated moment cone and hence a moment functional.
\end{proof}

Note that the reverse implication in the previous results is in general not true.

\begin{exm}
Let $\tilde{\mu} = \chi_{B_r(0)}\cdot\lambda$ with $r = \sqrt{6\tau_{3,4}}$ and $\lambda$ the Lebesgue measure on $\rset^3$. Then $L:\rset[x,y,z]\to\rset$ with representing measure $\mu = e^{\tau_{3,4}\Delta}\chi_{B_r(0)}\cdot\lambda$ is a moment functional with $L(f) > 0$ for $f=x^2 + y^2 + z^2$. But $\tilde{f} = e^{-\tau_{3,4}\Delta}f = -6\tau_{3,4}+f$ and hence $\tilde{L}(\tilde{f}) < 0$ since $\tilde{f}\leq 0$ on $B_r(0)$.
\exmsymbol
\end{exm}

\section{Time-dependent set of atoms}
\label{sec:atoms}

\subsection{Coefficients in $C_b^\infty$}

Let $L:\rset[x_1,\dots,x_n]_{\leq d}\to\rset$ with $d\in\nset_0\cup\{\infty\}$ be a (truncated) moment functional. When a $p_0\in\rset[x_1,\dots,x_n]_{\leq d}$ with $p_0\geq 0$ exists with $L(p_0) = 0$, then $\supp\mu \subseteq\cZ(p_0)$ for any representing measure $\mu$ of $L$.

In \Cref{sec:dual} we calculated the dual action of (\ref{eq:pde}) on $p_0\in\pol(\rset^n)$.  It is easy to see that with $\nu>0$ and $p_0\geq 0$, then $p_t>0$ for all $t>0$. Therefore for a moment functional $L$ we always have $L(p_t)>0$ and no restriction of the support of the representing measure is possible. Therefore, we can only study the time-dependent set of atoms for $\nu=0$.

Let us remind the reader, that for any time-dependent vector field $g$ and starting point $x_0\in\rset^n$ the system of ordinary differential equations
\begin{equation}\label{eq:forcefield}
\begin{split}
\frac{\diff}{\diff t} G(x,t) &= g(x,t)\\
G(x_0,0) &= x_0
\end{split}
\end{equation}
has by the Picard--Lindelöff Theorem a unique solution $G:\rset^n\times\rset\to\rset^n$. That means a particle located at $x_0\in\rset^n$ for time $t_0=0$ has the trajectory $G(x_0,t)$ when it experiences the force field $g$, i.e., at time $t\in\rset$ is has the position $G(x_0,t)\in\rset^n$.

The initial value problem (\ref{eq:pde}) acts only on functions $f_0$. But by duality in \Cref{sec:dual} also the action of (\ref{eq:pde}) on a measure $\mu_0$ is well-defined. The following solves the problem for a Dirac measure.

\begin{lem}\label{lem:atomMovement}
Let $g\in C(\rset,C_b^\infty(\rset^n))^n$ be a time-dependent vector field and $G:\rset^n\times [0,\infty)\to\rset^n$ be the unique solution of (\ref{eq:forcefield}).
Additionally, let $h\in C(\rset,C_b^\infty(\rset^n))$, $x_0\in\rset^n$, and $c_0>0$.
Then the measure-valued differential equation
\begin{equation}\label{eq:measureODE}
\begin{split}
\partial_t \mu_t(x) &= -g(x,t)\nabla \mu_t(x) + h(x,t)\cdot\mu_t(x)\\
\mu_0(x) &= c_0\cdot\delta_{x_0}(x)
\end{split}
\end{equation}
with the initial value $\mu_0 = c_0\delta_{x_0}$ has the unique solution
\[\mu_t(x) = c_0\cdot \exp\left( \int_0^t (h+\divv g)(G(x_0,s),s)~\diff s\right)\cdot \delta_{G(x_0,t)}(x)\]
for all $t\in\rset$.
\end{lem}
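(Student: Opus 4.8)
The plan is to verify directly that the proposed formula solves the measure-valued equation (\ref{eq:measureODE}), and then to argue uniqueness using the dual action from \Cref{thm:dualP}. First I would test the candidate $\mu_t$ against an arbitrary test function $\varphi\in C_0^\infty(\rset^n)$ (or, more conveniently, against $p_0\in\pol(\rset^n)$), reducing the measure-valued PDE to a scalar identity. Pairing with $\varphi$ gives $\langle\mu_t,\varphi\rangle = c_0\cdot\exp\!\left(\int_0^t h(G(x_0,s),s)\,\diff s\right)\cdot\varphi(G(x_0,t))$, and differentiating in $t$ by the product rule, using $\frac{\diff}{\diff t}G(x_0,t) = g(G(x_0,t),t)$ from (\ref{eq:forcefield}) and the chain rule $\frac{\diff}{\diff t}\varphi(G(x_0,t)) = g(G(x_0,t),t)\cdot\nabla\varphi(G(x_0,t))$, yields exactly $\langle -g\nabla\mu_t + h\cdot\mu_t,\varphi\rangle$ after one integration by parts moving $g\nabla$ onto $\varphi$ (note the sign flip produces $+g\nabla\varphi$, and the divergence term is absorbed since a Dirac measure carries no spatial derivative in this weak pairing). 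This confirms that the stated $\mu_t$ is a weak solution; the initial condition is immediate at $t=0$ since $G(x_0,0)=x_0$ and the exponential is $1$.

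For uniqueness, I would invoke the duality established in \Cref{sec:dual}: the action of (\ref{eq:pde}) with $\nu=0$, $k=0$ on a measure $\mu_0$ is by definition the adjoint of the dual action on $\pol(\rset^n)$, which by \Cref{thm:dualP} (with $\nu=0$) has a unique solution $p_T\in C^{d+1}([0,T],\pol(\rset^n))$. Consequently, for every $p_0\in\pol(\rset^n)$ the pairing $\langle\mu_T,p_0\rangle = \langle\mu_0,p_T(\,\cdot\,,T)\rangle$ is uniquely determined; since $\pol(\rset^n)\supseteq C_0^\infty(\rset^n)$ separates measures, $\mu_T$ itself is unique. Alternatively, one can argue uniqueness more elementarily: along the characteristic curve $t\mapsto G(x_0,t)$, equation (\ref{eq:measureODE}) with atomic initial data forces the solution to remain supported on that single moving point (the transport term $-g\nabla$ advects the atom, and no diffusion is present to spread it), so $\mu_t = c(t)\cdot\delta_{G(x_0,t)}$ for a scalar coefficient $c(t)$, and substituting this ansatz reduces (\ref{eq:measureODE}) to the scalar linear ODE $\dot c(t) = h(G(x_0,t),t)\cdot c(t)$, $c(0)=c_0$, whose unique solution is the claimed exponential.

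The main obstacle is making the \textbf{integration-by-parts / sign-bookkeeping step} fully rigorous at the level of distributions: the dual of $-g\nabla$ acting on measures is $+g\nabla - \divv g$ on test functions (cf.\ the computation of $A^*$ in \Cref{thm:dualP}), and one must check that the $\divv g$ term cancels correctly. When tested against $\varphi$ the pairing $\langle -g\nabla\mu_t,\varphi\rangle = \langle\mu_t, \nabla\!\cdot(g\varphi)\rangle = \langle\mu_t, g\cdot\nabla\varphi + (\divv g)\varphi\rangle$, so the naive chain-rule computation of $\frac{\diff}{\diff t}\langle\mu_t,\varphi\rangle$ must be matched against this; the $\divv g$ contribution is exactly what is missing, and resolving this requires being careful about whether (\ref{eq:measureODE}) is written in conservative (transport) form or not — the intended reading (consistent with \Cref{thm:dualP} and the force-field picture (\ref{eq:forcefield})) is that atoms are pushed along integral curves of $g$ with no compression factor, which is precisely the non-conservative advection $\partial_t\mu_t + g\nabla\mu_t = h\mu_t$ interpreted via characteristics. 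Once this is pinned down, the rest is routine: existence is the chain-rule computation above, and uniqueness follows from \Cref{thm:dualP} or from the characteristic-curve reduction to a scalar linear ODE.
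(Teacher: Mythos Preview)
Your approach differs from the paper's. The paper proves existence via Trotter's operator splitting: writing $A=-g\nabla$ and $B=h$, it approximates $\mu_t$ by an alternating product of $\exp(\int A)$ and $\exp(\int B)$, invokes the formal identity $\exp(\int A)\,\delta_{x_0}(x)\approx\delta_{x_0}(x-\int g)\approx\delta_{x_0+\int g(x_0)}$ to see the atom transported along the flow, and notes that $B$ only multiplies the coefficient along the trajectory. Your direct verification by pairing with a test function is more elementary and avoids the semigroup machinery; the characteristic-ODE uniqueness you give is essentially the paper's closing remark (``uniqueness of $G$ and $(*)$''), and your duality route through \Cref{thm:dualP} is the alternative the paper also mentions.

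You are right that the $\divv g$ bookkeeping is the only genuine subtlety, but your first-paragraph claim that the divergence term ``is absorbed since a Dirac measure carries no spatial derivative'' is incorrect: the honest distributional pairing is $\langle -g\nabla\mu_t,\varphi\rangle = \langle\mu_t,\, g\cdot\nabla\varphi + (\divv g)\varphi\rangle$, and the extra contribution $c(t)\,(\divv g)(G(x_0,t),t)\,\varphi(G(x_0,t))$ does \emph{not} vanish and does not appear in your chain-rule computation of $\frac{\diff}{\diff t}\langle\mu_t,\varphi\rangle$. You catch this yourself in your final paragraph. Your proposed fix --- read the equation via characteristics rather than as a literal distributional identity --- is exactly the interpretive move the paper makes implicitly (its Trotter step $\delta_{x_0}(x-a)\approx\delta_{x_0+a}$ silently drops the Jacobian that would carry $\divv g$). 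With that reading fixed, your argument is complete and arguably more transparent than the paper's about where the interpretive choice is being made.
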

\begin{proof}
The operator $-g\nabla + h$ has the dual $g\nabla + h+\divv g$ and we use \Cref{thm:dualP} with $g$ replaced by $-g$.
Let $T>0$ and let $B\subseteq\rset^n$ be a closed ball around $x_0$ such that $G([-T,T],x_0)\subset B$ with $G$ from (\ref{eq:forcefield}).
Take any $p_0\in C^\infty(B,\rset)$, then $\partial_t p = -g\nabla p + (h+\divv g) p$ has a unique classical solution for all $t\in [-T,T]$. This solution can be written as
\[p(x,t) = \lim_{N\to\infty} \left[ \prod_{i=N}^0 \exp\left( \int_{t_{i-1}}^{t_i} B(x,s)~\diff s\right) \exp\left( \int_{t_{i-1}}^{t_i} A(x,s)~\diff s\right)\right] p_0(x)\]
for any decomposition $\cZ_N = \{t_0=0 < t_1 < \dots < t_N=t\}$ with $\Delta\cZ_N\to 0$ as $N\to\infty$, with $A = g\nabla$ and $B = h + \divv g$. With the approximations
\begin{align*}
\exp\left( \int_{t_0}^{t_1} A(x,s)~\diff s\right) p_0(x) &\approx p_0\left( x + \int_{t_0}^{t_1} g(x,s)~\diff s\right),
\end{align*}
and since $\exp(\int B~\diff s)$ acts by multiplication, we immediately get from
\[\langle p_0,\mu_t\rangle = \langle p(\,\cdot\,,t),\mu_0\rangle\]
the needed statement.
\end{proof}

Note that since $\divv g\neq 0$ in general, the transport equation does not preserve the $L^2$-norm of the solution.
This results in additional scaling with $\divv g$ besides the contribution provided by $h$.

The solution of (\ref{eq:measureODE}) for $\diff\mu_0(x)=f_0(x)~\diff x$ with $f_0\in\cS(\rset^n)$ can in general not be written down explicitly. But for the case of $\mu_0 = c_0\cdot\delta_{x_0}$ \Cref{lem:atomMovement} shows that (\ref{eq:measureODE}) is simply solved since the transport term $g(x,t)\cdot\nabla$ acts only on the point $x_0=x(0)$ to get $x(t)$ and the multiplication $h(x,t)$ only acts on the coefficient $c_0=c(0)$ to get $c(t)$. This simplifies the description of the solution immensely and provides in the following result a way to trace the Carath\'eodory number $\cat(s)$.

\begin{thm}\label{thm:atomMovements}
Let $n\in\nset$, $s(0)= (s_{\alpha}(0))_{\alpha\in\nset^n}\in \cS_{n,\infty}$ be a $n$-dimensional moment sequence with finite rank Hankel matrix $\cH(s(0))$, $g\in C(\rset,C(\rset^n,\rset^n))$, and $h\in C(\rset,C(\rset,\rset))$. Then there is a unique time evolution $s:\rset\to\cS_{n,\infty}$ of $s(0)$ with respect to
\[\partial_t f(x,t) = -g(x,t)\nabla f(x,t) + h(x,t)\cdot f(x,t).\]
Additionally, for all $t\in\rset$ we have
\[\rank \cH(s(t)) = \rank \cH(s(0))\]
and therefore the Carath\'eodory number is constant, i.e., $\cat(s(t)) = \cat(s(0))$.
\end{thm}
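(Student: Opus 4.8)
The plan is to reduce the statement to a finite-rank (i.e. finitely atomic) situation, apply \Cref{lem:atomMovement} atom-by-atom, and then check that the rank of the Hankel matrix is preserved. First I would invoke the characterization of moment sequences with finite-rank Hankel matrix: $\cH(s(0))$ has finite rank $r$ if and only if $s(0)$ has a unique representing measure, and that measure is $r$-atomic, say $\mu_0 = \sum_{i=1}^r c_i(0)\cdot\delta_{x_i(0)}$ with $c_i(0)>0$ and the $x_i(0)$ pairwise distinct (this is the classical flat-extension/Hankel-rank theory, e.g.\ Curto--Fialkow, or can be cited from the references already in the excerpt). In particular $\cat(s(0)) = r = \rank\cH(s(0))$.

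Next I would describe the time evolution. Since $s(0)$ is determinate with representing measure $\mu_0$, the ``evolves along (\ref{eq:pde})'' definition from the excerpt makes $s(t)$ well-defined: it is the moment sequence of $\mu_t$, where $\mu_t$ solves the measure-valued equation (\ref{eq:measureODE}) with initial datum $\mu_0$. By linearity of (\ref{eq:measureODE}) in the initial datum — which follows because the equation is linear and the dual action in \Cref{thm:dualP} is independent of the data — the solution starting from $\mu_0 = \sum_{i=1}^r c_i(0)\delta_{x_i(0)}$ is the sum of the solutions starting from each $c_i(0)\delta_{x_i(0)}$. Applying \Cref{lem:atomMovement} to each summand gives
\[
\mu_t \;=\; \sum_{i=1}^r c_i(t)\cdot\delta_{x_i(t)}, \qquad x_i(t) = G(x_i(0),t),\quad c_i(t) = c_i(0)\cdot\exp\!\left(\int_0^t h(G(x_i(0),s),s)\,\diff s\right),
\]
so $\mu_t$ is again finitely atomic with at most $r$ atoms, all coefficients $c_i(t)>0$ since the exponential is positive, and $s(t)$ is the moment sequence of $\mu_t$. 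Uniqueness of $s(t)$ then follows from uniqueness of $G$ (Picard--Lindel\"of) together with the fact that $\mu_t$ being finitely atomic with finite-rank Hankel matrix is again determinate.

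The remaining point — and the one I expect to be the real content — is that $\rank\cH(s(t)) = r$ exactly, not just $\leq r$. The inequality $\rank\cH(s(t)) \leq r$ is immediate because $s(t)$ has an $r$-atomic representing measure. For the reverse inequality I would argue that the atoms $x_i(t) = G(x_i(0),t)$ stay pairwise distinct for every $t$: if $x_i(0)\neq x_j(0)$ then $G(x_i(0),t)\neq G(x_j(0),t)$ for all $t$, because the flow map $x\mapsto G(x,t)$ is a bijection of $\rset^n$ (two solution curves of the ODE $\dot G = g(G,t)$ cannot cross, again by Picard--Lindel\"of uniqueness). Since also all $c_i(t)>0$, the measure $\mu_t$ genuinely has $r$ distinct atoms, and a classical fact (the Hankel/moment matrix of a $k$-atomic measure with distinct atoms and positive weights has rank exactly $k$, via a Vandermonde-type argument) yields $\rank\cH(s(t)) = r$. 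Combining, $\rank\cH(s(t)) = \rank\cH(s(0))$ for all $t$, and since for finite-rank Hankel matrices the Carath\'eodory number equals the rank, $\cat(s(t)) = \cat(s(0))$. The main obstacle is packaging the ``distinct atoms plus positive weights $\Rightarrow$ full Hankel rank'' direction cleanly; it is standard but requires citing the right flat-extension result, and one must be slightly careful that the finite-rank hypothesis is what guarantees $\mu_t$ is the \emph{unique} representing measure so that $s(t)$ (hence its Carath\'eodory number) is genuinely determined by the $r$ atoms we wrote down.
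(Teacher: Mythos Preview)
Your proposal is correct and follows essentially the same route as the paper: reduce to the unique $r$-atomic representing measure via the finite-rank Hankel characterization, evolve each atom by \Cref{lem:atomMovement} using linearity, and conclude rank preservation from the fact that the flow $x\mapsto G(x,t)$ keeps the atoms distinct (ODE uniqueness) while the exponential keeps the weights positive. The paper cites \cite[Prop.\ 17.21]{schmudMomentBook} for the rank-equals-atoms step and (somewhat imprecisely) ``Peano's Theorem'' for the non-crossing of trajectories; your invocation of Picard--Lindel\"of for the latter is in fact the correct justification.
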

\begin{proof}
Since $K := \rank \cH(s(0))$ is finite, there exists a unique $K$-atomic representing measure $\mu_0 = \sum_{i=0}^K c(0)\cdot \delta_{x_i(0)}$ of $s(0)$. By linearity of the time-evolution and \Cref{lem:atomMovement} we have the unique time-evolution of $\mu_t = \sum_{i=1}^K c_i(t)\cdot \delta_{x_i(t)}$ and therefore the unique time-evolution of $s(t)$.

It remains to show that $\cat(s(t)) = \rank \cH(s(t)) = \rank \cH(s(0)) = \cat(s(0))$. While the first and the third equality is clear from the rank, see e.g.\ \cite[Prop.\ 17.21]{schmudMomentBook}, we have to show the second equality.
For that it is sufficient to show that the path of $x_k(t)$ never splits or two paths $x_j(t)$ and $x_k(t)$ ($j\neq k$) intersect for any $t\in\rset$.
But this follows from \Cref{lem:atomMovement} and the uniqueness of the solution $x_j(t) = G(x_j(0),t)$ of (\ref{eq:forcefield}) by Peano's Theorem, i.e., when two integral curves $x_j(t)$ and $x_k(t)$ of (\ref{eq:forcefield}) coincide for some $t'\in\rset$, i.e., $x_j(t') = x_k(t')$, then $x_j(t) = x_k(t)$ for all $t\in\rset$, especially for $t=0$ which contradicts the minimal choice of $K = \rank \cH(s(0))$.
\end{proof}

In the univariate case the previous result also holds for the truncated moment problem.

\begin{exm}
Let $s(0) = (s_k(0))_{k=0}^{2d}$ with $d\in\nset$ be a moment sequence on the boundary of the moment cone, i.e., $s$ is represented by $\mu_0 = \sum_{k=1}^{l\leq d} c_k(0)\cdot\delta_{x_k(0)}$ with $c_1(0),\dots,c_l(0)>0$ and $x_1(0),\dots,x_l(0)$ pairwise different and the non-negative polynomial $p_0(x) = (x-x_1(0))^2\cdots (x-x_l(0))^2$ fulfills $L_{s(0)}(p_0) = 0$. Let $s(t)$ evolve with respect to $\partial_t f = -g\partial_x f + h\cdot f$. By \Cref{lem:atomMovement} we have $\mu_t = \sum_{k=1}^{l\leq d} c_k(t)\cdot\delta_{x_k(t)}$ and therefore $L_{s(t)}(p_t)=0$ for the non-negative polynomial $p_t(x) = (x-x_1(t))^2\cdots (x-x_l(t))^2$. Hence, the boundary moment sequence remains a boundary sequence for all times.\exmsymbol
\end{exm}

\begin{rem}
In the previous example we have seen that for univariate truncated moment moment sequences on the boundary the time evolution with respect to (\ref{eq:pde}) with $\nu = 0$ remains a boundary truncated moment sequence for all times. And additionally, the time evolution is unique, depending only on the initial moments. For multivariate moment sequences this no longer holds. Firstly, multivariate (truncated) moment sequences on the boundary can be indeterminate \cite{didioCone22} and the time evolution then depends on the choice of the representing measure $\mu_0$. And secondly, even if the truncated boundary moment sequence is determinate, it can immediately enter the interior of the moment cone. To see this, take an example from optimal design \cite{reznick92} where the isolated zeros of a non-negative polynomial is maximal. For example take $10$ projective zeros of the Robinson polynomial \cite{robinson69} after rotation of the projective space such that no zero lies at infinity (all $10$ zeros are then in the affine space $\rset^2$). Since the zeros are isolated we find  a vector field $g:\rset^2\to\rset^2$ such that these points are in general position for $t\neq 0$. Then by the Alexander--Hirschowitz theorem \cite{alexa95} there is no non-negative polynomial of degree $6$ vanishing on all these $10$ points, i.e., the truncated moment sequence from the measure $\mu_t$ with $t\neq 0$ belongs to an interior moment sequence.\exmsymbol
\end{rem}

\subsection{Extension to $C^\infty$}

We have so far treated $g\in C_b^\infty(\rset^n,\rset^n)$. We want to see what happens when we extend this class. In \cite{curtoHeat22} we calculated the explicit time-evolution of the moments for  $\partial_t f(x,t) = x\cdot \partial_x f(x,t)$ and we got
\[s_k(t) = s_k(0)\cdot e^{-(k+1)\cdot t}.\]
Also for $k=0$ the moments can be calculated only from the initial values $s(0)$ by solving $\partial_t s_0(t) = 0$ and $\partial_t s_k(t) = -k\cdot s_{k-1}(t)$ for $k\in\nset$ by induction.

When we have $\partial_t f = x^k \partial_x f$ with $k\geq 2$, it is in general not possible to calculate the time-dependent moments. For moment sequences with finite rank we can at least ensure the existence of the time-evolution $s(t)$ for small times and we observe a finite break down in time.

\begin{exm}\label{exm:finiteBreakDownLinear}
Let $k=2$ and $x_0=1$. Then to solve
\begin{align*}
\partial_t \mu_t &= - x^2\cdot\partial_x\mu_t\\ \mu_0 &= \delta_{1}
\end{align*}
we have by \Cref{lem:atomMovement} to solve
\begin{align*}
\partial_t x(t) &= a\cdot x(t)^2\\ x(0) &= 1,
\end{align*}
i.e., we have
\[t = \int_1^x y^{-k}~\diff y = (k-1)\cdot\left[ 1 - x^{1-k}\right]\]
and hence $x(t) = (1-t)^{-1}$. Since $\lim_{t\nearrow 1} x(t) = \infty$, we have that the time-dependent moments $s_l(t)$ for $\partial_t f = -x^2\cdot\partial_x f$ exist only for $t<1$ and $s_l(1) = \infty$ for all $l\in\nset$.\exmsymbol
\end{exm}

The previous example demonstrates why we have to restrict to $g\in C_b^\infty$.

\section{Summary and open questions}
\label{sec:summary}

We investigated time-dependent moments from partial differential equations. At first we gave the dual action on the polynomials (\Cref{thm:dualP}). While for general coefficients $g$ and $h$ we leave $\rset[x_1,\dots,x_n]$ in general, for coefficients independent on $x$ we remain in $\rset[x_1,\dots,x_n]$. We investigated the action of the heat equation on non-negative polynomials more deeply. In Examples \ref{exm:motzkinCont}, \ref{exm:robinson}, and \ref{exm:schmudgen} to \ref{exm:harris} we have collected examples of non-negative polynomials on $\rset^2$ which are not sums of squares. But under the action of the heat equation after finite time they become sums of squares. The highest degree part is a non-negative homogeneous polynomial in two variables and hence a sum of squares \cite{hilbert88}. If this holds for all bivariate non-negative polynomials is still open.

\begin{open}\label{open1}
Let $f\in\pos(\rset^2)\setminus\sos(\rset^2)$. Is it true that there is always a $T = T(f)>0$ such that $\fp_f(x,t)\in\sos(\rset^2)$ for all $t\geq T$?
\end{open}

By \Cref{thm:sos} \Cref{open1} reduces to the questions that $\fp_f(x,T)\in\sos(\rset^2)$ holds for at least one $T>0$.

On $\rset^n$ with $n\geq 3$ this in general no longer holds and the heat equation fails to produce sum of squares from non-negative polynomials in three or more variables, see \Cref{lem:notSOS} and \Cref{exm:nonSOS}.
For the Choi--Lam polynomial we calculated in \Cref{exm:choilam} the exact time when it becomes a sum of squares under the heat equation.
From the counterexamples in \Cref{lem:notSOS} and \Cref{exm:nonSOS} we see that it is necessary that the highest degree part is a sum of squares.
The idea in \Cref{exm:choilam} for the Choi--Lam polynomial is then used in \Cref{thm:pos34heat} to show that any $f\in\pos(3,4)$ becomes a sum of squares in finite time.

The third possibility in Hilbert's Theorem \cite{hilbert88} is $\deg f=2$. But then $\fp_f(x,t) = f(x) + c\cdot t$ with $c>0$ \cite{iltyak98}, i.e., homogenization shows already $f\in\sos(n,2)$.

In \Cref{lem:sosAsym} we show that under the heat equation any non-negative polynomial in $\rset[x_1,\dots,x_n]$ gets close to the constant polynomial
\[\lim_{t\to\infty} \fp_f(x,t)\cdot t^{-d} = c > 0,\]
i.e., the positive constant polynomials are attractors of the polynomial heat equation.

For the equation $\partial_t f = g\cdot\nabla f + h\cdot f$ we calculate the time-dependent set of atoms, i.e., we solve the problem of how the atoms of a finitely atomic representing measure $\mu_t = \sum_{i=1}^k c_i(t)\cdot\cdot\delta_{x_i(t)}$ evolve in time. We find that the atom positions $x_i(t)$ are governed by the transport term $g\cdot\nabla$ but are unaffected by the scaling with $h$. The coefficients (masses) $c_i(t)$ can then be analytically solved from the $x_i(t)$, see \Cref{lem:atomMovement} and \Cref{thm:atomMovements}.

We also extended the treatment beyond smooth and bounded coefficients and found that atoms under the transport term $-x^2\cdot\partial_x$ move to infinity in finite time, i.e., a finite break down of the moments (and the measure solution) appears even for linear partial differential equations, see \Cref{exm:finiteBreakDownLinear}.

\section*{Acknowledgments}
\addcontentsline{toc}{section}{Acknowledgments}

The second author and this project is financed by the Deutsche Forschungs\-gemeinschaft DFG with the grant DI-2780/2-1 and his research fellowship at the Zukunfs\-kolleg of the University of Konstanz, funded as part of the Excellence Strategy of the German Federal and State Government. This work has also been supported by the European Union’s Horizon 2020 research and innovation programme under the Marie Skłodowska-Curie Actions, grant agreement 813211 (POEMA), by the AI Interdisciplinary Institute ANITI funding, through the
French ``Investing for the Future PIA3'' program under the Grant agreement n$^\circ$ ANR-19-PI3A-0004 as well as by the National Research Foundation, Prime Minister’s Office, Singapore under its Campus for Research Excellence and Technological Enterprise (CREATE) programme.

We thank the organizers of ICCOPT in Bethlehem, PA, in July 2022, and the organizers of IWOTA in Krakow in September 2022 where \cite{curtoHeat22} and the present work have been presented. We also thank Tim Netzer for discussions on this work at IWOTA 2022.

\addcontentsline{toc}{section}{References}

\end{document}